\theoremstyle{plain}
\newtheorem{theorem}{Theorem}[section]
\newtheorem{prop}[theorem]{Proposition}
\newtheorem{lemma}[theorem]{Lemma}
\newtheorem{corol}[theorem]{Corollary}
\theoremstyle{definition}
\newtheorem{remark}{\textnormal{\textbf{Remark}}}
\newtheorem*{acknowledgement}{\textnormal{\textbf{Acknowledgement}}}
\theoremstyle{remark}
\newtheorem{example}{Example}
\numberwithin{equation}{section}
\newcommand{\SLn}[1][n]{\mathrm{SL}(#1)}
\newcommand{\sln}[1][n]{\mathfrak{sl}(#1)}
\newcommand{\GLn}[1][n]{\mathrm{GL}(#1)}
\newcommand{\Ad}{\mathrm{Ad}}
\newcommand{\C}{\mathbf{C}}
\newcommand{\Z}{\mathbf{Z}}
\newcommand{\co}{\colon}
\newcommand{\rep}{r}
\begin{document}
\title[Deformations of reducible representations]{Deformations of reducible representations of knot groups into $\mathrm{SL}(n,\mathbf{C})$}
\author[M. Heusener \and O. Medjerab]{Michael Heusener \and Ouardia Medjerab}
\newcommand{\acr}{\newline\indent}
\address{\llap{*\,}Clermont Universit\'e, Universit\'e Blaise Pascal,\acr 
Laboratoire de Mathématiques, BP 10448, F-63000 Clermont-Ferrand. \acr
CNRS, UMR 6620, LM, F-63171 Aubi\`ere, 
FRANCE}
\email{heusener@math.univ-bpclermont.fr}
\address{\llap{**\,}Ecole Normale Sup\'erieure de Kouba, Alger\acr
B.P.~92 Kouba 16050 \acr
ALGERIE}
\email{medjerab@ens-kouba.dz}
\subjclass{25M57}
\keywords{Representations of knot groups,  $\mathrm{SL}(n,\mathbf{C})$-representation variety, metabelian  representations}
\begin{abstract}
Let $K$  be a knot in  $S^3$ and $X$ its complement. We study deformations of non-abelian,  metabelian, reducible 
 representations of the knot group  $\pi_1(X)$ 
into $\mathrm{SL}(n,\mathbf{C})$ which are associated to  a simple root of the  Alexander polynomial.
We prove that some of these metabelian  reducible  representations are smooth points of the  
$\mathrm{SL}(n,\mathbf{C})$-representation variety and that they have irreducible deformations.
\end{abstract}

\maketitle
\today

\bigskip

\section{Introduction} 
 Let $K$ be a knot  in  $S^3$ and $X=\overline{S^3\setminus V(K)}$  its complement, where $V(K)$ is
a tubular neighborhood of $K$. Moreover, let   $\Gamma_K=\pi_1(X)$ denote  the fundamental group of $X$. The aim of this paper is to study deformations of reducible metabelian representations  of 
$\Gamma_K$ into $\SLn[n,\C]$. The metabelian representations in question where introduced by G.~Burde \cite{Burde1967} and G.~de~Rham \cite{deRham1967}. 
Let us recall this result: for each nonzero complex number
$\lambda\in\C^*$ there exists a diagonal representation $\rho_\lambda\co\Gamma_K\to\SLn[2,\C]$ given by 
\[
   \rho_\lambda(\gamma)=
     \begin{pmatrix}
\lambda^{\varphi(\gamma)}& 0 \\
0 & \lambda^{-\varphi(\gamma)} 
\end{pmatrix}\,.
\]
Here $\varphi\co\pi_1(X)\to\Z$ denotes the canonical surjection which maps the meridian $\mu$ of 
$K$ to $1$ i.e.\ $\varphi(\gamma)=\mathrm{lk}(\gamma,K)$.
Burde and de~Rham proved that there exists a metabelian, non-abelian, reducible representation
\begin{equation}\label{eq:redmetab}
    \rho_\lambda^z \co \Gamma_K  \to  \SLn[2,\C], \quad 
   \rho_\lambda^z(\gamma)=
   \begin{pmatrix}
1 & z(\gamma) \\
0 & 1 \\
\end{pmatrix} 
     \begin{pmatrix}
\lambda^{\varphi(\gamma)}& 0 \\
0 & \lambda^{-\varphi(\gamma)} 
\end{pmatrix}
\end{equation}
if and only if $\lambda^2$ is a root of the Alexander polynomial $\Delta_K(t)$.
Recall that a representation $\rho\co G\to \GLn[n,\C]$ of a group $G$ is called reducible if
the image $\rho(G)$ preserves a proper subspace of $\C^n$.
Otherwise, $\rho$ is called \emph{irreducible}.

The question whether or not the representation $\rho_\lambda^z$ is a limit of irreducible representations of $\Gamma_K$ into $\SLn[2,\C]$ was studied in \cite{Heusener-Porti-Suarez2001}.
Theorem~1.1 of \cite{Heusener-Porti-Suarez2001} states that a metabelian, non-abelian, reducible representation
$\rho_\lambda^z\co\Gamma_K\to\SLn[2,\C]$ 
is the limit of irreducible representations if 
$\lambda^2$ is a simple root of $\Delta_K(t)$. Moreover, in this case the representation 
$\rho_\lambda^z$ is a smooth point of the representation variety $R(\Gamma_K,\SLn[2,\C])$; 
it is contained in a unique $4$-dimensional component $R_\lambda\subset R(\Gamma_K,\SLn[2,\C])$.

This article studies the behavior of the representations in question under the composition with
the $n$-dimensional, irreducible, rational representation 
$\rep_n\co\SLn[2,\C]\to\SLn[n,\C]$. 
(for more details see Section~\ref{sec:sl2}).
It is proved in Proposition~\ref{prop:irred-exist} that generically for an irreducible representation
$\rho\in R_\lambda$ the representation $\rho_n:=\rep_n\circ\rho\in R(\Gamma_K,\SLn[n,\C])$ is also irreducible.
The main result of this article is the following:
 \begin{theorem}\label{thm:smoothR_n}
 If $\lambda^2$ is a simple root of $\Delta_K(t)$ and if
 $\Delta_K(\lambda^{2k})\ne0$ for $2\leq k \leq n-1$ then the reducible metabelian representation
$\rho_{\lambda,n}^z:=\rep_n\circ \rho_\lambda^z$ is a limit of irreducible representations.
More precisely, $\rho_{\lambda,n}^z$ is a smooth point of $R(\Gamma_K,\SLn[n,\C])$; it is contained in a unique 
$(n+2)(n-1)$-dimensional component $R_{\lambda,n}\subset R(\Gamma_K,\SLn[n,\C])$.
 \end{theorem}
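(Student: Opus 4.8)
The plan is to follow the strategy of \cite{Heusener-Porti-Suarez2001} and reduce the question of smoothness to a cohomological computation. Recall that by standard deformation theory, the Zariski tangent space to $R(\Gamma_K,\SLn[n,\C])$ at $\rho_{\lambda,n}^z$ is the space of cocycles $Z^1(\Gamma_K,\sln[n,\C])$, where $\Gamma_K$ acts on $\sln[n,\C]$ via $\Ad\circ\rho_{\lambda,n}^z$. If one can show that $\dim Z^1(\Gamma_K,\sln[n,\C]) = (n+2)(n-1)$ and exhibit a $(n+2)(n-1)$-dimensional subvariety of $R(\Gamma_K,\SLn[n,\C])$ through $\rho_{\lambda,n}^z$, then $\rho_{\lambda,n}^z$ is a smooth point lying on a unique component of that dimension. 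Since $\dim B^1 = \dim \sln[n,\C] - \dim H^0 = (n^2-1) - \dim(\sln[n,\C])^{\Gamma_K}$, the computation of $Z^1$ splits into computing $H^0$ and $H^1$ with these coefficients.

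First I would decompose the $\Gamma_K$-module $\sln[n,\C]$ under $\Ad\circ\rho_{\lambda,n}^z$. Since $\rep_n$ is the irreducible $n$-dimensional representation of $\SLn[2,\C]$, the Lie algebra $\sln[n,\C]$ decomposes under $\Ad\circ\rep_n$ restricted to the image of $\SLn[2,\C]$ as $\bigoplus_{k=1}^{n-1} V_{2k}$, where $V_{m}$ is the irreducible $\SLn[2,\C]$-module of dimension $m+1$ (this is the classical Clebsch–Gordan-type decomposition of $\sln[n,\C] = \mathrm{Sym}^{n-1}(\C^2)^* \otimes \mathrm{Sym}^{n-1}(\C^2) \ominus \C$). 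Now $\rho_\lambda^z$ is itself a reducible representation of $\Gamma_K$, an extension of $\rho_\lambda$ (the diagonal part) by the nilpotent part; so each $V_{2k}$ carries a filtration whose associated graded pieces are the one-dimensional modules $\C_{\lambda^{2j}}$ on which $\gamma$ acts by $\lambda^{2j\varphi(\gamma)}$, for $j = -k, \dots, k$. The hypotheses of the theorem — that $\lambda^2$ is a \emph{simple} root of $\Delta_K$ and $\Delta_K(\lambda^{2k})\ne 0$ for $2\le k\le n-1$ — are precisely what control $H^*(\Gamma_K,\C_{\lambda^{2j}})$ for the relevant exponents $2j$ with $|2j| \le 2(n-1)$: by Milnor's theorem on the Alexander module, $H^1(\Gamma_K,\C_{t_0})$ is nonzero iff $t_0$ is a root of $\Delta_K$, and for a \emph{simple} root it is one-dimensional (with $H^0 = 0$), while for $t_0 = 1$ one has $H^0 = H^1 = \C$.

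Next I would run a long-exact-sequence / $\delta$-functor argument, as in \cite{Heusener-Porti-Suarez2001}, to pass from the graded pieces $\C_{\lambda^{2j}}$ up through the filtration of each $V_{2k}$ to compute $\dim H^1(\Gamma_K, V_{2k})$ and $\dim H^0(\Gamma_K,V_{2k})$, and then sum over $k$. The key input is that the extension classes defining the filtration of $V_{2k}$ inside $\sln[n,\C]$ are the \emph{nontrivial} ones coming from the non-abelian representation $\rho_\lambda^z$; nontriviality (guaranteed because $z$ realizes a simple root and $\rho_\lambda^z$ is non-abelian) forces the connecting homomorphisms in the long exact sequences to have maximal rank, so that there is no "extra" cohomology beyond what the $V_2$-piece (the adjoint of $\SLn[2,\C]$ itself) already contributes — this is where the computation of Heusener–Porti–Suárez gets reused essentially verbatim for $k=1$, and the higher $k$ contribute nothing to $H^1$ and only the $t_0 = \lambda^0 = 1$ terms to $H^0$. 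Carrying this out should give $\dim H^1(\Gamma_K,\sln[n,\C]) = n$ and $\dim H^0 = n-1$ (the latter being the centralizer of an irreducible-$\SLn[2,\C]$ image, namely the $(n-1)$-dimensional Cartan-type part), whence $\dim Z^1 = \dim B^1 + \dim H^1 = (n^2-1-(n-1)) + n = (n+2)(n-1)$.

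Finally I would produce the lower bound by constructing an explicit $(n+2)(n-1)$-dimensional family of representations through $\rho_{\lambda,n}^z$: compose the $4$-dimensional component $R_\lambda\subset R(\Gamma_K,\SLn[2,\C])$ from \cite{Heusener-Porti-Suarez2001} with $\rep_n$ to get a $4$-dimensional family, then enlarge it by conjugation by $\SLn[n,\C]$ — since the generic member is irreducible by Proposition~\ref{prop:irred-exist}, its $\SLn[n,\C]$-orbit has dimension $n^2-1$, but one must be careful about the overlap with the already-present conjugation directions inside the $\rep_n(R_\lambda)$ family (the $\SLn[2,\C]$-conjugation, of dimension $3$, and more relevantly the centralizer $H^0$). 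A dimension count of the form $\dim(\text{orbit}) + \dim(R_\lambda) - \dim(\text{common directions})$ should land on $(n^2-1) + 4 - 3 = n^2 = (n+2)(n-1)+1$... so the bookkeeping must instead be: the family $\rep_n(R_\lambda)$ already contains the $\SLn[2,\C]$-conjugates, and the transverse $\SLn[n,\C]/\SLn[2,\C]$-directions have dimension $(n^2-1)-3$, giving $4 + (n^2-1) - 3 = n^2$, which overcounts by one; the correct statement is that the component $R_{\lambda,n}$ has dimension $4 - 1 + (n^2 - 1) - (n - 1) = n^2 - n + 2$? — this must be reconciled with $(n+2)(n-1) = n^2 + n - 2$, so in fact the enlargement is by the full $\SLn[n,\C]$-orbit of dimension $n^2-1-\dim H^0_{\SLn[2,\C]} $ where the relevant stabilizer is the $\SLn[2,\C]$-image's centralizer, and a clean way to see the bound is: $R_{\lambda,n}$ contains the image under the action map $\SLn[n,\C]\times R_\lambda \to R(\Gamma_K,\SLn[n,\C])$ whose generic fiber has dimension $\dim(\text{stabilizer of }\rho \text{ in } \SLn[n,\C]) + (\text{reparametrizations within }R_\lambda) = (n-1) + 1$, so the image has dimension $(n^2-1) + 4 - (n-1+1) = n^2 + 2 - n$; equality $(n+2)(n-1) = n^2+n-2$ then shows this is not yet everything, and the remaining $2(n-1)$ dimensions come from the non-conjugation deformations of the $V_{2k}$ pieces for $k \ge 2$ — which is precisely why the hypothesis $\Delta_K(\lambda^{2k}) \ne 0$ (rather than being a root) is needed, as it kills those $H^1$-directions and makes the count close up. The main obstacle is exactly this last reconciliation: getting the cohomological upper bound and the geometric lower bound to meet at $(n+2)(n-1)$ requires careful tracking of which deformation directions are integrable and which orbit directions are already accounted for, and I expect that — as in \cite{Heusener-Porti-Suarez2001} — the cleanest route is to prove $\dim Z^1 = (n+2)(n-1)$ cohomologically and separately exhibit an irreducible deformation (via Proposition~\ref{prop:irred-exist}) lying in a component that, being reduced and of dimension $\ge \dim Z^1$ by general position, must equal $(n+2)(n-1)$, forcing smoothness.
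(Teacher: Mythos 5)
Your overall strategy (Clebsch--Gordan decomposition of $\sln_{\rho_{\lambda,n}^z}$ into the summands $R_{2k}$, $1\le k\le n-1$, then long exact sequences over the filtration by the characters $\C_{\lambda^{2j}}$) is the paper's first step, but the cohomology you extract from it is wrong, and your own arithmetic already signals this: with your values $\dim H^0=n-1$, $\dim H^1=n$ you get $\dim Z^1=(n^2-1-(n-1))+n=n^2$, which is \emph{not} $(n+2)(n-1)=n^2+n-2$. The correct values are $H^0(\Gamma_K;\sln_{\rho_{\lambda,n}^z})=0$ and $\dim H^1=n-1$. The $(n-1)$-dimensional ``Cartan-type'' centralizer you invoke is the centralizer of the \emph{diagonal} representation $\rho_{\lambda,n}$; the representation $\rho_{\lambda,n}^z$ is non-abelian, and each summand $R_{2k}$ has no invariants (already $H^0(\Gamma_K;R_2)=0$ because $\rho_\lambda^z$ is non-abelian, and the hypotheses propagate this up the chain $R_2\to R_4\to\cdots$). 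Dually, the higher summands do \emph{not} ``contribute nothing to $H^1$'': the hypothesis $\Delta_K(\lambda^{2k})\ne0$ does not kill those directions, it guarantees that peeling off $\C_{\lambda^{\pm 2k}}$ from $R_{2k}$ changes nothing, so $H^*(\Gamma_K;R_{2k})\cong H^*(\Gamma_K;R_2)$ and \emph{each} of the $n-1$ summands contributes exactly one dimension to $H^1$. This yields $\dim Z^1=(n^2-1)+(n-1)=(n+2)(n-1)$.

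The second, more serious gap is the smoothness mechanism. Your proposed lower bound --- conjugating the $4$-dimensional family $\rep_n(R_\lambda)$ by $\SLn$ --- produces a family of dimension at most $(n^2-1)+4-3=n^2$, which falls short of $(n+2)(n-1)$ by $n-2$ for every $n\ge 3$; the missing directions are precisely the cocycles coming from $H^1(\Gamma_K;R_{2k})$, $k\ge2$, i.e.\ deformations that do \emph{not} factor through $\SLn[2]$ and cannot be exhibited by any such explicit construction. Your fallback (``a component of dimension $\ge\dim Z^1$ by general position'') has no justification: the general deformation-theoretic lower bound is $\dim Z^1-\dim H^2$, and $H^2(\Gamma_K;\sln_{\rho_{\lambda,n}^z})$ is not zero here. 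The paper closes this gap by proving that \emph{every} cocycle is integrable: one restricts to the boundary torus, uses Richardson's theorem that $R_n(\Z\oplus\Z)$ is irreducible of dimension $(n+2)(n-1)$ together with the equality $\dim H^1(\Gamma_K;\sln_{\rho})=n-1$ and Poincar\'e--Lefschetz duality to show that the restriction to $\partial M$ is a regular point and that $i^*\co H^2(M;\sln_\rho)\to H^2(\partial M;\sln_\rho)$ is injective, concludes that all obstruction classes vanish because they vanish on the boundary, and finally invokes Artin's theorem to convert the formal deformations into convergent ones. Without an argument of this kind your proof cannot reach either the smoothness or the dimension claim.
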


\begin{remark}
Let $\rho_{\lambda,n}\co\Gamma_K \to \SLn[n,\C]$ be the diagonal representation 
given by $\rho_{\lambda,n}=\rep_n\circ\rho_\lambda$. The group $\SLn[n,\C]$ acts on the 
representation variety $R(\Gamma_K,\SLn[n,\C])$ by conjugation, and
the orbit $\mathcal O (\rho_{\lambda,n})$ is contained in the closure 
$\overline{\mathcal O (\rho_{\lambda,n}^z)}$. Hence $\rho_{\lambda,n}$ and $\rho_{\lambda,n}^z$ 
project to the same point $\chi_{\lambda,n}$ of the \emph{character variety}
$$
X(\Gamma_K,\SLn[n,\C])= R(\Gamma_K,\SLn[n,\C])\mathop{\sslash} \SLn[n,\C]\,.
$$
Here $R(\Gamma_K,\SLn[n,\C])\mathop{\sslash} \SLn[n,\C]$ denotes the GIT quotient of the action (see \cite{Newstead1978} for more details). Recall that the GIT quotient parametrizes the closed orbits of the $\SLn[n,\C]$  action.

It is possible to study the local picture of the character variety at $\chi_{\lambda,n}$ as done in 
\cite{Heusener-Porti-Suarez2001} and
\cite{Heusener-Porti2005}. 
Unfortunately, there are additional technical difficulties, 
and the computations necessary are much more involved. 
These complications are due to the fact that the diagonal representation $\rho_{\lambda,n}$ is contained in
$2^{n-1}$ components of $R(\Gamma_K,\SLn[n,\C])$. Nevertheless, only the component
$R_{\lambda,n}$ contains irreducible representations.
We will address this topic in a forthcoming paper.
\end{remark}

P.~Menal-Ferrer and J.~Porti 
\cite{MenalFerrer-Porti2012}  showed that the conclusions of the above theorem hold for hyperbolic knots 
 if $\rho_\lambda^z$ is replaced by a lift  of the holonomy, 
 $\widetilde{\mathrm{hol}}\co\pi_1(S^3\smallsetminus K)\to\SLn[2,\C]$,  
 of the hyperbolic structure of the complement $S^3\smallsetminus K$. Note that Theorem~\ref{thm:smoothR_n} and Proposition~\ref{prop:irred-exist} do apply to non-hyperbolic knots. Irreducible metabelian representations and their deformations are studied by H.~Boden and S.~Friedl in a series of articles \cite{Boden-Friedl2008,Boden-Friedl2011,Boden-Friedl2013,Boden-Friedl2014}. 
 In particular the deformations of \emph{irreducible} metabelian representations,
 which are not considered in this paper, are studied in
 \cite{Boden-Friedl2013}.

 This article is organized as follows: in Section~\ref{sec:Notation} we will introduce notation and recall some facts which are used. In Section~\ref{sec:deform} we will prove that the representation variety
$R(\Gamma_K,\SLn[n,\C])$ contains an irreducible representation if $\Delta_K(t)$ has a simple root (see Proposition~\ref{prop:irred-exist}). Moreover we give a streamlined proof of a slightly generalized version of the deformation result used in \cite{MenalFerrer-Porti2012,BenAbdelghani-Heusener-Jebali2010,Boden-Friedl2013}
 (see Proposition~\ref{prop:smoothpoint}).
The necessary cohomological calculations and the basic facts about the representation theory of 
$\SLn[2,\C]$ are presented in Section~\ref{sec:cohocalculation},
in order to prove our main result, Theorem~\ref{thm:smoothR_n}.
Finally, in Section~\ref{sec:examples} some examples are exhibited.

\begin{acknowledgement} The authors thank Micha\"el Bulois and Simon Riche for pointing out the references \cite{Richardson1979} and \cite{Popov2008}. 
They also thank the anonymous referee for helpful comments and hints.

The first author was supported by the ANR project SGT and
ANR project ModGroup. The second author acknowledges support from the
Algerian \emph{Minist\`ere de l'Enseignement Sup\'erieur et de la Recherche Scientifique.}
\end{acknowledgement}

\section{Notation and Facts}\label{sec:Notation}
 To shorten notation we write $\SLn$, $\GLn$ and $\sln$ instead of $\SLn[n,\C]$, $\GLn[n,\C]$ and $\sln[n,\C]$.
 
Let $\varphi\co\pi_1(X)\to\Z$ denote the canonical surjection which maps the meridian $\mu$ of $K$ to $1$
i.e.\ $\varphi(\gamma)=\mathrm{lk}(\gamma,K)$. 
We associate to a nonzero complex number $\alpha \in \C^{\ast}$ a homomorphism
\[
     \alpha^\varphi\co\Gamma_K \to \C^{*} ,\quad
  \alpha^\varphi\co\gamma  \mapsto   \alpha^{\varphi(\gamma)}\,.
\]
Note that $\alpha^\varphi$ maps the meridian $\mu$ of $K$ to $\alpha$. 
We define $\C_{\alpha}$ to be the $\Gamma_K$-module $\C$ with the action induced by 
$\alpha^\varphi$, i.e. 
$\gamma\cdot x=\alpha^{\varphi(\gamma)}x$ for all $\gamma \in \Gamma_K$ and all $x \in \C$.
The trivial $\Gamma_K$-module $\C_1$ is simply denoted $\C$.
With this notation it is easy to see that a map 
\[
    \rho_\lambda^z \co \Gamma_K  \to  \SLn[2,\C], \quad 
   \rho_\lambda^z(\gamma)=
   \begin{pmatrix}
1 & z(\gamma) \\
0 & 1 \\
\end{pmatrix} 
     \begin{pmatrix}
\lambda^{\varphi(\gamma)}& 0 \\
0 & \lambda^{-\varphi(\gamma)} 
\end{pmatrix}
\]
 is a homomorphism if and only if the map
 $z\co\Gamma_K\to\C_{\lambda^2}$ is a $1$-cocycle i.e.\
 $z(\gamma_1\gamma_2)= z(\gamma_1) + \lambda^{2\varphi(\gamma_1)}z(\gamma_2)$. 
Note also that $\rho_\lambda^z$ is abelian if $\lambda=\pm1$. If $\lambda^2\neq1$ then 
$\rho_\lambda^z$ is abelian if and only if $z$ is a coboundary i.e.\ there exists an element 
$x_0\in\C$ such that $z(\gamma) = (\lambda^{2\varphi(\gamma)} -1 )\,x_0$.
The general reference for group cohomology is Brown's book \cite{Brown1982}.

In what follows we are mainly interested in the following situation:
let $X$ be the complement of a knot $K\subset S^3$ and let 
$A$ be a $\pi_1(X)$-module.
The spaces $X$ and $\partial X$ are aspherical and hence the natural
homomorphisms $H^*(\pi_1(X) ; A)\to H^*(X ; A)$ and $H^*(\pi_1(\partial X) ; A)\to H^*(\partial X ; A)$ are isomorphisms.  Moreover, the knot complement $X$ has the homotopy type of a $2$-dimensional CW-complex which implies that 
$H^k(\pi_1(X) ; A)=0$ and $H_k(\pi_1(X) ; A)=0$ for $k\geq3$. 
See \cite{Whitehead1978} for more details.

The Laurent polynomial ring $\C[t^{\pm1}]$ 
turns into a $\Gamma_K$-module via the action $\gamma\, p(t) = t^{\varphi(\gamma)}\,p(t)$ for all $\gamma \in \Gamma_K$ and all 
$p(t) \in \C[t^{\pm1}]$. Recall that there are isomorphisms of $\C[t^{\pm1}]$-modules
\[
H_*(\Gamma_K; \C[t^{\pm1}]) \cong H_*(X; \C[t^{\pm1}]) \cong
H_*(X_\infty; \C)
\]
where $X_\infty$ denotes the infinite cyclic covering of the knot complement $X$ (see \cite[Chapter~5]{Davis-Kirk2001}). 
The module $H_1(\Gamma_K; \C[t^{\pm1}])$ is a finitely generated torsion module  called the \emph{Alexander module} of $K$. A generator of its order ideal is called the \emph{Alexander polynomial} $\Delta_{K}(t) \in \C [t^{\pm 1}]$ of $K$. The Alexander polynomial is unique up to multiplication with a unit in $\C[t^{\pm1}]$.

For completeness we will state the next lemma which shows that
the cohomology groups $H^*(\Gamma_K;\C_\alpha)$ are determined by the Alexander module
$H_1(\Gamma_K; \C[t^{\pm1}])$. 
\begin{lemma}\label{lem:H1GammaC} Let $K\subset S^3$ be a knot and $\Gamma_K$ its group.
Let $\alpha\in\C^*$ be a nonzero complex number and let $\C_\alpha$ denote the $\Gamma_K$-module given by the action $\gamma\, z = \alpha^{\varphi(\gamma)} z$. 

If $\alpha=1$ then $\C_\alpha=\C$ is a trivial $\Gamma_K$-module and
$H^k(\Gamma_K, \C)\cong \C$ for $k=0,1$ and 
$H^k(\Gamma_K, \C)=0$ for $k\geq 2$.
If $\alpha\neq1$ then $H^0(\Gamma_K, \C_\alpha)= 0$  and 
$\dim H^1(\Gamma_K, \C_\alpha)= \dim H^2(\Gamma_K, \C_\alpha)$. Moreover,
$H^1(\Gamma_K, \C_\alpha)\neq 0 $ if and only if $\Delta_K(\alpha)=0$.
\end{lemma}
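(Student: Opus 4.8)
The plan is to exploit two standard facts: first, that $X$ is aspherical and homotopy equivalent to a $2$-complex, so $H^k(\Gamma_K;\C_\alpha)=0$ for $k\geq 3$ and $\chi(X)=0$ gives a constraint on the dimensions; second, that the cohomology with coefficients in $\C_\alpha$ can be read off from the Alexander module via a universal-coefficient/evaluation argument. For the case $\alpha=1$: the module is trivial, so $H^0(\Gamma_K;\C)=\C$, and $H^1(\Gamma_K;\C)\cong\mathrm{Hom}(\Gamma_K,\C)\cong\mathrm{Hom}(H_1(X;\Z),\C)\cong\C$ since $H_1(X;\Z)\cong\Z$ (generated by the meridian). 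The vanishing $H^k(\Gamma_K;\C)=0$ for $k\geq 2$ follows because $X$ is a homology circle: $H_k(X;\Z)=0$ for $k\geq 2$, and then universal coefficients over $\C$ gives $H^k(X;\C)=0$ for $k\geq 2$. (Alternatively one notes $\chi(X)=0$ forces $\dim H^2=\dim H^0 - \dim H^1 + \dim H^2 = 0$ once $H^{\geq 3}=0$, but the direct homology computation is cleaner.)

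For $\alpha\neq 1$: first compute $H^0(\Gamma_K;\C_\alpha) = \{x\in\C \mid \alpha^{\varphi(\gamma)}x = x\ \forall\gamma\} = 0$, since $\varphi$ is onto and $\alpha\neq 1$. Next, since $X$ is homotopy equivalent to a finite $2$-complex, $H^k(\Gamma_K;\C_\alpha)=0$ for $k\geq 3$, and the Euler characteristic of the chain complex computing $H^*(\Gamma_K;\C_\alpha)$ equals $\chi(X)\cdot\dim\C_\alpha = 0$. Therefore
\[
\dim H^0(\Gamma_K;\C_\alpha) - \dim H^1(\Gamma_K;\C_\alpha) + \dim H^2(\Gamma_K;\C_\alpha) = 0,
\]
and since the first term vanishes we get $\dim H^1(\Gamma_K;\C_\alpha)=\dim H^2(\Gamma_K;\C_\alpha)$.

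It remains to show $H^1(\Gamma_K;\C_\alpha)\neq 0 \iff \Delta_K(\alpha)=0$. The idea is to relate $\C_\alpha$ to the Alexander module by viewing $\C_\alpha$ as $\C[t^{\pm1}]/(t-\alpha)$ with its natural $\Gamma_K$-action, and to run the long exact sequence in homology induced by $0\to\C[t^{\pm1}]\xrightarrow{t-\alpha}\C[t^{\pm1}]\to\C_\alpha\to 0$. Because $H_1(\Gamma_K;\C[t^{\pm1}])$ is a torsion $\C[t^{\pm1}]$-module with order $\Delta_K(t)$ and $H_0(\Gamma_K;\C[t^{\pm1}])\cong\C[t^{\pm1}]/(t-1)$ (on which $t-\alpha$ acts as $1-\alpha\neq 0$, hence invertibly), the connecting and multiplication maps show $H_1(\Gamma_K;\C_\alpha)\cong H_1(\Gamma_K;\C[t^{\pm1}])/(t-\alpha)$, which is nonzero exactly when $(t-\alpha)$ divides $\Delta_K(t)$, i.e. $\Delta_K(\alpha)=0$. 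Finally pass from homology to cohomology: since $\C_\alpha$ is a finite-dimensional $\C$-vector space, universal coefficients gives $H^1(\Gamma_K;\C_\alpha)\cong\mathrm{Hom}_\C(H_1(\Gamma_K;\C_\alpha),\C)$ once one checks the relevant $H_0$ is free over $\C$ (it is, being a $\C$-vector space), so $\dim H^1(\Gamma_K;\C_\alpha)=\dim H_1(\Gamma_K;\C_\alpha)$ and the equivalence follows.

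The main obstacle is the bookkeeping in the last step: one must be careful that the $\C[t^{\pm1}]$-module identifications are equivariant, that $t-\alpha$ really acts invertibly on $H_0$ (this is where $\alpha\neq 1$ is used a second time), and that the universal-coefficient spectral sequence degenerates here because the coefficient module is a field — none of these is deep, but together they are the crux of the argument. Everything else is a direct consequence of asphericity and the $2$-complex structure of $X$ recalled earlier in this section.
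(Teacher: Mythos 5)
Your proof is correct and follows essentially the same route as the paper: $H^0$ by direct computation of invariants, $\dim H^1=\dim H^2$ from $\chi(X)=0$ together with the $2$-complex structure of $X$, and the detection of $\Delta_K(\alpha)=0$ via $(t-\alpha)$-torsion in the finitely generated torsion module $H_1(\Gamma_K;\C[t^{\pm1}])$ over the PID $\C[t^{\pm1}]$ (the paper applies the universal coefficient theorem directly to cohomology, while you first compute $H_1(\Gamma_K;\C_\alpha)$ from the coefficient sequence $0\to\C[t^{\pm1}]\xrightarrow{t-\alpha}\C[t^{\pm1}]\to\C_\alpha\to0$ and then dualize --- a cosmetic difference). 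The one point deserving a remark is that dualizing a nontrivial one-dimensional coefficient module naturally yields the contragredient, i.e.\ $H^1(\Gamma_K;\C_\alpha)\cong \mathrm{Hom}_\C\bigl(H_1(\Gamma_K;\C_{\alpha^{-1}}),\C\bigr)$ rather than the dual of $H_1(\Gamma_K;\C_\alpha)$; this is harmless here because $\Delta_K(t)\doteq\Delta_K(t^{-1})$, and it is precisely the equivariance bookkeeping you flagged as the crux.
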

\begin{proof} 
We have $H_0(X_\infty; \C) \cong \C \cong \C[t^{\pm1}]/ (t-1)$ and
$H_k(X_\infty; \C)=0$ for $k\geq 2$ (see \cite[Prop.~8.16]{BZH2013}). 
If $\alpha=1$ then $H^k(\Gamma_K, \C)\cong \C$ for $k=0,1$ and 
$H^k(\Gamma_K, \C)=0$ for $k\geq 2$ follows.

Now suppose that $\alpha\in\C^*$, $\alpha\neq1$, and notice that we have an isomorphism $\C_\alpha\cong\C[t^{\pm1}]/ (t-\alpha)$. The cohomology group $H^0(\Gamma_K, \C_\alpha)$ vanishes, since
the $\Gamma_K$-module $\C_\alpha$ has no invariants, and $H^k(\Gamma_K, \C_\alpha)=0$ for 
$k> 2$ since the knot complement $X$ has the homotopy type of a $2$-complex.
Recall that the Alexander module $H_1(\Gamma_K ; \C[t^{\pm1}])$ is finitely generated torsion module and hence a sum of non-free cyclic modules since $\C[t^{\pm1}]$ is a principal ideal domain. The Alexander polynomial is the order ideal of 
$H_1(\Gamma_K ; \C[t^{\pm1}])$. 
Since $\alpha\neq1$, it follows from the universal coefficient theorem that
$H^1(\Gamma ; \C_\alpha)\cong \mathrm{Hom}(H_1(\Gamma_K; \C); \C_\alpha)$.
Hence $H^1(\Gamma_K, \C_\alpha)\neq 0 $ if and only if the module $H_1(\Gamma_K; \C)$
has $(t-\alpha)$-torsion which is equivalent to $\Delta_K(\alpha)=0$.
Finally, $\dim H^1(\Gamma_K, \C_\alpha)= \dim H^2(\Gamma_K, \C_\alpha)$ follows since the Euler characteristic of $X$ vanishes. See also \cite[Proposition 2.1]{BenAbdelghani2000} for more details.
\end{proof}

In what follows we will also make use of the Poincar\'e-Lefschetz duality theorem with twisted coefficients:
let $M^m$ be a connected, orientable, compact $m$-dimensional manifold with boundary $\partial M^m$ and let
$\rho\co\pi_{1}(M^m)\to \SLn$ be a representation. Then the cup-product and the Killing form $b\co\sln_\rho\otimes\sln_\rho\to\C$ 
 induce a non-degenerate bilinear pairing
\begin{multline}\label{eq:poincare} H^k(M^m ; \sln_\rho)\otimes H^{m-k}(M^m,\partial M^m ; \sln_\rho)\xrightarrow{\smallsmile}\\
H^{m}(M^m,\partial M^m ; \sln_\rho\otimes\sln_\rho)\xrightarrow{b}H^{m}(M^m,\partial M^m ; \C)\cong \C
\end{multline}
and hence an isomorphism $H^k(M^m ; \sln_\rho)\cong H^{m-k}(M^m,\partial M^m ; \sln_\rho)^*$, for all $0\leq k\leq m$.
See \cite{Johnson-Millson1987,Porti1995} for more details.

 \subsection{Group cohomology and representation varieties}
\label{sec:cohorep}
%

Let now $\Gamma$ be a finitely generated group.
The set $R_n(\Gamma):=R(\Gamma,\SLn)$ of homomorphisms of $\Gamma$ in 
$\SLn$ is called the $\SLn$-repre\-sentation variety of $\Gamma$ and has the structure of a (not necessarily irreducible) algebraic set.

Let $\rho\co\Gamma\to \SLn$ be a representation. 
The Lie algebra $\sln$ turns into a $\Gamma$-module via 
$\Ad\,\rho$. This module will be simply denoted by 
$\sln_\rho$. A cocycle $d\in Z^1(\Gamma;\sln_\rho)$ is a map 
$d\co\Gamma\to \sln$ satisfying 
\[d(\gamma_1\gamma_2)=d(\gamma_1)+\rho(\gamma_1)\,d(\gamma_2)\,\rho(\gamma_1)^{-1}\quad,\ \forall\ \gamma_1,\ \gamma_2\in\Gamma\,.\]

It was observed by Andr\'e Weil \cite{Weil1964} that there is a natural inclusion of the Zariski tangent space $T_\rho^\mathit{Zar}(R_n(\Gamma))\hookrightarrow Z^1(\Gamma ; \sln_\rho)$. Informally speaking, given a smooth curve $\rho_\epsilon$ of representations through $\rho_0=\rho$ one gets a $1$-cocycle $d\co\Gamma\to \sln$ by defining
\[ d(\gamma) := \left.\frac{d \, \rho_{\epsilon}(\gamma)}
{d\,\epsilon}\right|_{\epsilon=0} \rho(\gamma)^{-1},
\quad\forall\gamma\in\Gamma\,.\]

It is easy to see that the tangent space to the orbit by conjugation corresponds to the space of $1$-coboundaries $B^1(\Gamma ; \sln_\rho)$. Here, $b\co\Gamma\to \sln$ 
is a coboundary if there exists 
$x\in \sln$ such that $b(\gamma)=\rho(\gamma)\,x\,\rho(\gamma)^{-1}-x$. A detailed account can be found in \cite{Lubotzky-Magid1985}.

Let $\dim_\rho R_n(\Gamma)$ be the local dimension of $R_n(\Gamma)$ at $\rho$ (i.e.\ the maximal dimension of the irreducible components of $R_n(\Gamma)$ containing $\rho$ 
\cite[Ch.~II]{Shafarevich1977}). So we obtain:
\[ \dim_\rho R_n(\Gamma) \leq \dim T_\rho^\mathit{Zar}(R_n(\Gamma))\leq 
\dim Z^1(\Gamma ; \sln_\rho)\,.\]
We will call a representation $\rho\in R_n(\Gamma)$  \emph{regular} if 
$\dim_\rho R_n(\Gamma) = \dim Z^1(\Gamma ; \sln_\rho)$.

The following lemma follows
(for more details see \cite[Lemma~2.6]{Heusener-Porti-Suarez2001}):
\begin{lemma}\label{lem:smoothness}
Let $\rho\in R_n(\Gamma)$ be a representation. If $\rho$ is regular, then $\rho$ is a smooth point of the representation variety $R_n(\Gamma)$ and $\rho$ is contained in a unique component of $R_n(\Gamma)$ of dimension 
$\dim Z^1(\Gamma;\sln_\rho)$.
\end{lemma}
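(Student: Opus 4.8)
The plan is to exploit the chain of inequalities established just above the statement, namely
\[
\dim_\rho R_n(\Gamma) \leq \dim T_\rho^\mathit{Zar}(R_n(\Gamma)) \leq \dim Z^1(\Gamma;\sln_\rho),
\]
together with the regularity hypothesis $\dim_\rho R_n(\Gamma) = \dim Z^1(\Gamma;\sln_\rho)$. First I would observe that these two facts force all three quantities to coincide: in particular $\dim T_\rho^\mathit{Zar}(R_n(\Gamma)) = \dim_\rho R_n(\Gamma)$. This is precisely the statement that the dimension of the Zariski tangent space at $\rho$ equals the local dimension of the variety at $\rho$, which is the standard algebraic criterion for $\rho$ to be a smooth (nonsingular) point of $R_n(\Gamma)$; see \cite[Ch.~II]{Shafarevich1977}.

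Next I would recall the general fact that a variety is, locally at a smooth point, irreducible — a smooth point lies on a unique irreducible component. Indeed, if $\rho$ lay on two distinct components, then near $\rho$ the local ring would fail to be an integral domain, contradicting regularity of the local ring at a smooth point. Hence $\rho$ is contained in a unique component $C$ of $R_n(\Gamma)$. By definition of the local dimension $\dim_\rho R_n(\Gamma)$ as the maximum of the dimensions of the components through $\rho$, and since there is now only one such component, we get $\dim C = \dim_\rho R_n(\Gamma) = \dim Z^1(\Gamma;\sln_\rho)$, which is exactly the asserted dimension.

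I do not expect any real obstacle here: the lemma is essentially a formal consequence of basic algebraic geometry (the equality of tangent-space dimension and local dimension characterizing smoothness, plus local irreducibility at smooth points) combined with Weil's inclusion $T_\rho^\mathit{Zar}(R_n(\Gamma)) \hookrightarrow Z^1(\Gamma;\sln_\rho)$ recalled above. The only point requiring a modicum of care is to make sure the inclusion $T_\rho^\mathit{Zar} \hookrightarrow Z^1$ is genuinely used to get the middle inequality, and that "regular" as defined forces equality throughout; once that is in place the conclusion is immediate. For a detailed exposition of exactly this argument one may refer to \cite[Lemma~2.6]{Heusener-Porti-Suarez2001}.
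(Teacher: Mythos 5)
Your argument is correct and is essentially the paper's own: the paper gives no written proof, deferring to \cite[Lemma~2.6]{Heusener-Porti-Suarez2001}, which runs exactly this way (regularity collapses the chain $\dim_\rho R_n(\Gamma)\leq\dim T_\rho^{\mathit{Zar}}\leq\dim Z^1$ to equalities, equality of tangent-space and local dimension forces the local ring to be regular, hence a domain, hence a unique component of the stated dimension). No gaps.
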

Note that there are discrete groups and representations $\rho$ which are smooth
points of the representation variety without been regular. (See
\cite[Example~2.10]{Lubotzky-Magid1985} for more details.)

\section{Deforming representations}
\label{sec:deform}

The aim of the following sections is to prove that the representation $\rho_{\lambda,n}^z$ from the introduction is a  smooth point of the representation variety.
We present a more streamlined and slightly generalized version of the deformation result from 
\cite{Heusener-Porti-Suarez2001,Heusener-Porti2005,BenAbdelghani-Heusener-Jebali2010,MenalFerrer-Porti2012,Boden-Friedl2014} (see Proposition~\ref{prop:smoothpoint}).
For the convenience of the reader we recall the setup.

First we will prove that the representation $\rho_{\lambda,n}^z\in R_n(\Gamma_K)$ is the limit of irreducible representations if $\lambda^2$ is a simple root of of the Alexander polynomial $\Delta_K(t)$. In what follows a property of an irreducible algebraic variety $Y$ is said to be true \emph{generically} if it holds except on a proper Zariski-closed subset of $Y$, in other words, if it holds on a non-empty Zariski-open subset.

Let $K\subset S^3$ be a knot, $\lambda^2\in\C$ a simple root of $\Delta_K(t)$ and
$z\in Z^1(\Gamma_K,\C_{\lambda^2})$ a cocycle representing a generator of 
$H^1(\Gamma_K,\C_{\lambda^2})$. Following \cite[Thm~1.1]{Heusener-Porti-Suarez2001} the representation $\rho_\lambda^z\in R_2(\Gamma_K)$ is a smooth point of the representation variety.
It is contained in an unique irreducible $4$-dimensional component 
$R_\lambda\subset R_2(\Gamma_K)$. Note that generically a representation $\rho\in R_\lambda$ is irreducible.
\begin{prop}\label{prop:irred-exist}
Let $K\subset S^3$ be a knot, $\lambda^2\in\C$ a simple root of $\Delta_K(t)$ and let
$z\in Z^1(\Gamma_K,\C_{\lambda^2})$ be a cocycle representing a generator of 
$H^1(\Gamma_K,\C_{\lambda^2})$. 

Then the representation 
$\rho_{\lambda,n}^z=r_n\circ \rho_{\lambda}^z\co\Gamma_K\to B_n$ is the limit of irreducible representation in $R_n(\Gamma_K)$. More precisely, generically a representation 
$\rho_n = r_n\circ \rho$, $\rho\in R_\lambda$ is irreducible.
\end{prop}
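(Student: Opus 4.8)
The plan is to show that $r_n\circ\rho$ is irreducible for generic $\rho\in R_\lambda$, and to obtain the statement about $\rho_{\lambda,n}^z$ by passing to the limit. First I would note that the set of irreducible representations is Zariski-open in $R_n(\Gamma_K)$ (its complement, the representations preserving a proper subspace of $\C^n$, is Zariski-closed) and that $\rho\mapsto r_n\circ\rho$ defines a morphism $R_\lambda\to R_n(\Gamma_K)$. Hence the set $U\subseteq R_\lambda$ of those $\rho$ for which $r_n\circ\rho$ is irreducible is Zariski-open. Since $R_\lambda$ is irreducible, $U$ is either empty or dense, both for the Zariski and for the Euclidean topology; in the latter case $\rho_\lambda^z\in R_\lambda$ is a Euclidean limit of a sequence in $U$, and composing with $r_n$ presents $\rho_{\lambda,n}^z=r_n\circ\rho_\lambda^z$ as a limit of irreducible representations, which also yields the last assertion of the Proposition. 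So the whole point is to exhibit a single $\rho\in R_\lambda$ with $r_n\circ\rho$ irreducible.

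The key algebraic input is that $r_n\co\SLn[2,\C]\to\SLn[n,\C]$ is the $(n-1)$-st symmetric power $\mathrm{Sym}^{n-1}$ of the standard representation, hence an \emph{irreducible} rational representation of $\SLn[2,\C]$. Since invariance of a linear subspace is a Zariski-closed condition on the acting group, a Zariski-dense subgroup of $\SLn[2,\C]$ acts irreducibly on $\C^n$. Therefore, if $\rho\co\Gamma_K\to\SLn[2,\C]$ has Zariski-dense image, then $r_n\circ\rho$ is irreducible, and it suffices to find $\rho\in R_\lambda$ whose image is Zariski-dense in $\SLn[2,\C]$.

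By \cite[Thm~1.1]{Heusener-Porti-Suarez2001} the generic $\rho\in R_\lambda$ is already irreducible, so I only have to check that the irreducible $\rho\in R_\lambda$ with non Zariski-dense image do not exhaust $R_\lambda$. An irreducible subgroup of $\SLn[2,\C]$ that is not Zariski-dense is conjugate into the normalizer $N(T)$ of a maximal torus, or into one of the finitely many conjugacy classes of primitive finite subgroups (the binary tetrahedral, octahedral and icosahedral groups). Representations with image in a conjugate of a fixed finite group form finitely many $\SLn[2,\C]$-conjugation orbits; for an irreducible representation such an orbit is Zariski-closed (a stable point of the conjugation action) and has dimension $\dim\SLn[2,\C]-\dim\{\pm I\}=3$, so this locus has dimension at most $3$ and cannot contain the $4$-dimensional irreducible variety $R_\lambda$. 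To rule out $N(T)$: if $\rho\in R_\lambda$ is irreducible with $\rho(\Gamma_K)\subseteq N(T)$, then $\rho(\mu)\notin T$, for otherwise $\rho^{-1}(T)$ would be a normal subgroup of $\Gamma_K$ containing the meridian $\mu$, hence all of $\Gamma_K$, as $\mu$ normally generates $\Gamma_K$; but then $\rho(\Gamma_K)\subseteq T$ would be abelian and $\rho$ reducible. So $\rho(\mu)$ lies in the non-identity coset of $T$ in $N(T)$ and $\mathrm{tr}\,\rho(\mu)=0$. However $\rho\mapsto\mathrm{tr}\,\rho(\mu)$ is a regular function on $R_\lambda$ taking the value $\lambda+\lambda^{-1}$ at $\rho_\lambda^z$, and $\lambda+\lambda^{-1}\neq 0$ since $\lambda^2\neq-1$ — indeed $-1$ is never a root of $\Delta_K(t)$, because $|\Delta_K(-1)|=\det K$ is odd. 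Thus $\mathrm{tr}\,\rho(\mu)$ does not vanish identically on the irreducible variety $R_\lambda$, so its zero set is a proper Zariski-closed subset, and the generic $\rho\in R_\lambda$ has $\mathrm{tr}\,\rho(\mu)\neq 0$ and is therefore not conjugate into $N(T)$.

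Combining these observations, the generic $\rho\in R_\lambda$ is irreducible, is not conjugate into a primitive finite subgroup, and is not conjugate into $N(T)$; such a $\rho$ has Zariski-dense image in $\SLn[2,\C]$, so $r_n\circ\rho$ is irreducible and $U\neq\emptyset$, completing the argument. The main obstacle is precisely this last step — excluding, for the generic member of $R_\lambda$, the images $N(T)$ and the primitive finite subgroups — and it is where the hypothesis that $\lambda^2$ be a \emph{simple} root of $\Delta_K$ enters (through \cite{Heusener-Porti-Suarez2001}, which gives that $R_\lambda$ is irreducible of dimension $4$ with irreducible generic point), together with the elementary fact that $\Delta_K(-1)\neq 0$.
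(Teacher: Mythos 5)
Your proof is correct and follows essentially the same route as the paper: reduce to showing the generic $\rho\in R_\lambda$ has Zariski-dense image, then exclude the binary dihedral case via the meridian trace (using $\Delta_K(-1)\neq 0$) and the three exceptional finite groups via a dimension count on their finitely many closed $3$-dimensional orbits inside the $4$-dimensional irreducible $R_\lambda$. The only difference is cosmetic: where the paper cites Nagasato for the fact that irreducible binary dihedral representations send the meridian to a traceless element, you derive it directly from the normal generation of $\Gamma_K$ by $\mu$.
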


\begin{proof}
It follows from \cite[Theorem 1.1]{Heusener-Porti-Suarez2001} that 
$\rho_{\lambda}^z\in R_2(\Gamma_K)$ is the limit of irreducible representations. 
Moreover, $\rho_{\lambda}^z\in R_2(\Gamma_K)$ is a smooth point which is contained in a unique $4$-dimensional component $R_{\lambda}\subset R_2(\Gamma_K)$.

Let $\Gamma$ be a discrete group and let $\rho\co\Gamma\to\SLn[2,\C]$ be an irreducible representation.
If the image $\rho(\Gamma)\subset\SLn[2,\C]$ is Zariski-dense then the representation $\rho_n := r_n\circ\rho\in R_n(\Gamma)$ is irreducible. Hence in order to prove the proposition we show that there is a neighborhood $U=U(\rho_\lambda^z)\subset R_2(\Gamma_K)$ such that
$\rho(\Gamma)\subset\SLn[2,\C]$ is Zariski-dense for each irreducible $\rho\in U$.
Let now $\rho\co\Gamma_K\to\SLn[2,\C]$ be any irreducible representation and let $G\subset\SLn[2]$ denote the Zariski-closure of $\rho(\Gamma_K)$. Suppose that $G\neq\SLn[2]$.
Since $\rho$ is irreducible it follows that $G$ is, up to conjugation, not a subgroup of upper-triangular matrices of $\SLn[2]$. 
Then by \cite[Sec.~1.4]{Kovacic1986} and \cite[Theorem~4.12]{Kaplansky1957}
there are, up to conjugation, only two cases left:
\begin{itemize}
\item $G$ is a subgroup of the infinite dihedral group
\[ D_\infty =\Big\{ \big(\begin{smallmatrix} \alpha & 0\\ 0 &\alpha^{-1}\end{smallmatrix}\big)
\,\big|\, \alpha \in\C^*\Big\} \cup
\Big\{ \big(\begin{smallmatrix} 0 & \alpha\\ -\alpha^{-1}&0\end{smallmatrix}\big)
\,\big|\, \alpha \in\C^*\Big\}\,.\]

\item $G$ is one of the groups $A_4^{\SLn[2]}$ (the tetrahedral group), 
$S_4^{\SLn[2]}$ (the octahedral group) or $A_5^{\SLn[2]}$ (the icosahedral group). 
These groups are the preimages  in $\SLn[2]$ of the subgroups 
$A_4$, $S_4$, $A_5 \subset \mathrm{PSL}(2,\C)$.
\end{itemize}

In the first case it follows directly from 
\cite{Nagasato2007} that if $\rho$ is an irreducible metabelian representation
then the trace of the image of a meridian 
$\mathrm{tr}(\rho(\mu))=0$ i.e. $\rho(\mu)$ is similar to 
$\pm\big(\begin{smallmatrix} i & 0 \\ 0 &-i \end{smallmatrix}\big)$. Now, $\mathrm{tr}(\rho_\lambda^z(\mu))\neq 0$ since
$\Delta_K(-1)\neq0$ and $\Delta_K(\lambda^{\pm2})=0$.
For the second case  there are up to conjugation only finitely many irreducible representations of $\Gamma_K$ onto the subgroups
$A_4^{\SLn[2]}$, $S_4^{\SLn[2]}$ and $A_5^{\SLn[2]}$. Note that these finitely many orbits are closed and $3$-dimensional. Hence the irreducible $\rho\in R_\lambda$ such that $r_n\circ\rho$ is reducible is contained in a Zariski-closed subset of $R_\lambda$. Hence generically $r_n\circ\rho$ is irreducible for $\rho\in R_\lambda$.
\end{proof}

\begin{remark}
Recall that a finite group has only finitely many irreducible representations (see \cite{Serre1978,Fulton-Harris1991}). Hence, the restriction of $r_n$ to the groups 
$A_4^{\SLn[2]}$, $S_4^{\SLn[2]}$ and $A_5^{\SLn[2]}$ is reducible, for all but finitely many 
$n\in\mathbf{N}$.
\end{remark}

In order to prove that a certain representation $\rho\in R_n(\Gamma)$ is a smooth point of the representation variety
we will prove that  every cocycle $u\in Z^1(\Gamma_K;\sln_{\rho})$ is integrable.
In order to do this, we use the classical approach, i.e.\ we first solve the corresponding formal problem and apply then a  theorem of Artin \cite{Artin1968}. 

The formal deformations of a representation $\rho\co\Gamma\to \SLn$ are in general determined by an infinite sequence of obstructions (see \cite{Goldman1984, BenAbdelghani2000,Heusener-Porti-Suarez2001}).
In what follows we let
$C^1(\Gamma;\sln):=\{ c\co\Gamma\to\sln\}$ denote the $1$-cochains of $\Gamma$ with coefficients in $\sln$ (see \cite[p.59]{Brown1982}).

Let $\rho\co\Gamma\to \SLn$ be a representation. A formal deformation of $\rho$ is a homomorphism 
$\rho_\infty\co\Gamma\to \SLn[n,\C\llbracket  t \rrbracket]$

\[\rho_\infty(\gamma)=\exp\left(\sum_{i=1}^{\infty}t^iu_i(\gamma)\right)\rho(\gamma)\, ,
\quad u_i\in C^1(\Gamma;\sln) \]
such that $\mathrm{ev}_0\circ\rho_\infty=\rho$. Here $\mathrm{ev}_0\co \SLn[n,\C\llbracket t\rrbracket]\to \SLn$ is the evaluation homomorphism at $t=0$ and 
$\C\llbracket t \rrbracket$ denotes the ring of formal power series.

 We will say that $\rho_\infty$ is a formal deformation up to  order $k$ of $\rho$ if $\rho_\infty$ is a homomorphism modulo $t^{k+1}$.

An easy calculation gives that $\rho_\infty$ is a homomorphism up to first order if and only if 
$u_1\in Z^1(\Gamma ; \sln_\rho)$ is a cocycle. We call a cocycle 
$u_1\in Z^1(\Gamma;\sln_\rho)$ \emph{integrable} if there is a formal deformation of $\rho$ with leading term $u_1$.

\begin{lemma}
Let $u_1,\ldots,u_k\in C^1(\Gamma;\sln_\rho)$ such that
\[\rho_k(\gamma)=\exp\left(\sum_{i=1}^k t^iu_i(\gamma)\right)\rho(\gamma)\]
is a homomorphism into $\SLn[n,\C\llbracket t\rrbracket/(t^{k+1})]$. 
Then there exists an obstruction class 
$\zeta_{k+1}:=\zeta_{k+1}^{(u_1,\ldots,u_k)}\in H^2(\Gamma, \sln_\rho)$ with the following properties:
\begin{enumerate}[(i)]
\item  There is a cochain $u_{k+1}\co\Gamma\to \sln_\rho$ such that
\[\rho_{k+1}(\gamma)=\exp\left(\displaystyle\sum_{i=1}^{k+1}t^iu_i(\gamma)\right)\rho(\gamma)\]
is a homomorphism modulo $t^{k+2}$ if and only if $\zeta_{k+1}=0$.
\item  The obstruction $\zeta_{k+1}$ is natural, i.e. if $f\co\Gamma_1\to\Gamma$ is a homomorphism then $f^*\rho_k:=\rho_k\circ f$ is also a homomorphism modulo $t^{k+1}$ and $f^*(\zeta_{k+1}^{(u_1,\ldots,u_k)})=\zeta_{k+1}^{(f^*u_1,\ldots,f^*u_k)}\in 
H^2(\Gamma_1;\sln_{f^*\rho})$.
\end{enumerate}
\end{lemma}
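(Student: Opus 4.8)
The plan is to run the classical obstruction argument of Goldman and Ben Abdelghani in the formal power series ring. Write $\rho_k(\gamma)=\exp\bigl(\tau_k(\gamma)\bigr)\rho(\gamma)$ with $\tau_k(\gamma)=\sum_{i=1}^{k}t^iu_i(\gamma)$, and introduce the \emph{defect} cochain
\[
F_k(\gamma_1,\gamma_2):=\rho_k(\gamma_1)\,\rho_k(\gamma_2)\,\rho_k(\gamma_1\gamma_2)^{-1}\in\SLn[n,\C\llbracket t \rrbracket]\,.
\]
Since $\rho_k$ is a homomorphism modulo $t^{k+1}$ we have $F_k\equiv I\pmod{t^{k+1}}$, so there is a well-defined map $\zeta\co\Gamma\times\Gamma\to\sln$ with $F_k(\gamma_1,\gamma_2)=I+t^{k+1}\zeta(\gamma_1,\gamma_2)\pmod{t^{k+2}}$; it takes values in $\sln$ rather than $\mathfrak{gl}(n)$ because $\det F_k=1$ forces $\mathrm{tr}\,\zeta(\gamma_1,\gamma_2)=0$. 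I then set $\zeta_{k+1}:=[\zeta]\in H^2(\Gamma;\sln_\rho)$; note that this cochain, and hence this class, is literally determined by the fixed data $u_1,\dots,u_k$, which is the dependence recorded in the notation $\zeta_{k+1}^{(u_1,\dots,u_k)}$.

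Next I would check that $\zeta$ is a $2$-cocycle. This falls out of evaluating the triple product $\rho_k(\gamma_1)\rho_k(\gamma_2)\rho_k(\gamma_3)$ two ways modulo $t^{k+2}$: bracketing as $(\rho_k(\gamma_1)\rho_k(\gamma_2))\rho_k(\gamma_3)$ yields $\zeta(\gamma_1,\gamma_2)+\zeta(\gamma_1\gamma_2,\gamma_3)$ as $t^{k+1}$-coefficient, while bracketing as $\rho_k(\gamma_1)(\rho_k(\gamma_2)\rho_k(\gamma_3))$ yields $\Ad\rho(\gamma_1)\,\zeta(\gamma_2,\gamma_3)+\zeta(\gamma_1,\gamma_2\gamma_3)$ — here one uses $\rho_k\equiv\rho\pmod t$, so that conjugating a $t^{k+1}$-term by $\rho_k(\gamma_1)$ agrees with $\Ad\rho(\gamma_1)$ modulo $t^{k+2}$. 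Equating the two expressions is exactly the relation $\delta\zeta=0$ for the $\Ad\rho$-module $\sln_\rho$. This bookkeeping — tracking which Baker--Campbell--Hausdorff and conjugation corrections are $O(t^{k+2})$ and hence invisible, and noting that the surviving conjugations collapse to the $\Ad\rho$-action that lands us in $H^2(\Gamma;\sln_\rho)$ — is the one genuinely delicate point of the proof; everything else is formal.

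For part (i), I would carry out the coboundary computation. Given a $1$-cochain $u_{k+1}$, BCH gives $\exp\bigl(\sum_{i=1}^{k+1}t^iu_i(\gamma)\bigr)\equiv\exp\bigl(t^{k+1}u_{k+1}(\gamma)\bigr)\exp(\tau_k(\gamma))\pmod{t^{k+2}}$ since the commutator corrections sit in degrees $\ge k+2$, whence $\rho_{k+1}(\gamma)\equiv\bigl(I+t^{k+1}u_{k+1}(\gamma)\bigr)\rho_k(\gamma)\pmod{t^{k+2}}$. Substituting this into the defect and reducing modulo $t^{k+2}$ (again replacing $\Ad\rho_k(\gamma_1)$ by $\Ad\rho(\gamma_1)$ on the degree-$(k+1)$ term) shows that the defect cochain of $\rho_{k+1}$ equals $\zeta+\delta u_{k+1}$. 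Therefore a cochain $u_{k+1}$ making $\rho_{k+1}$ a homomorphism modulo $t^{k+2}$ exists if and only if $\zeta$ is a coboundary, that is, if and only if $\zeta_{k+1}=0$.

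Finally, naturality (ii) is a direct check. For a homomorphism $f\co\Gamma_1\to\Gamma$ one has $f^*\rho_k(\gamma)=\rho_k(f(\gamma))=\exp\bigl(\sum_i t^i(f^*u_i)(\gamma)\bigr)(f^*\rho)(\gamma)$, so $f^*\rho_k$ is a homomorphism modulo $t^{k+1}$ with defect cochain $(\gamma_1,\gamma_2)\mapsto\zeta(f(\gamma_1),f(\gamma_2))=(f^*\zeta)(\gamma_1,\gamma_2)$; passing to cohomology gives $f^*\bigl(\zeta_{k+1}^{(u_1,\dots,u_k)}\bigr)=\zeta_{k+1}^{(f^*u_1,\dots,f^*u_k)}$ in $H^2(\Gamma_1;\sln_{f^*\rho})$, which is the assertion.
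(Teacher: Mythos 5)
Your argument is correct and is exactly the classical obstruction computation that the paper invokes by reference (its proof simply says the argument is "completely analogous to the proof of Proposition~3.1 in [Heusener--Porti--Su\'arez 2001]" with $\SLn[2]$, $\sln[2]$ replaced by $\SLn$, $\sln$). The defect cochain, the associativity argument for $\delta\zeta=0$, the identification of the change under $u_{k+1}$ with $\delta u_{k+1}$, and the naturality check all match that standard proof.
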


\begin{proof}
The proof is completely analogous to the proof of 
Proposition~3.1 in \cite{Heusener-Porti-Suarez2001}. 
We replace $\SLn[2]$ and $\sln[2]$ by 
$\SLn$ and $\sln$ respectively. 
\end{proof}

The following result streamlines the arguments given in \cite{Heusener-Porti2005} and
\cite{BenAbdelghani-Heusener-Jebali2010}:

\begin{prop}\label{prop:smoothpoint}
Let $M$ be a connected, compact, orientable $3$-manifold with torus boundary
and let
$\rho\co\pi_1M\to \SLn$ be a representation.

If $\dim H^1(\pi_1M; \sln_{\rho}) =n-1$ then $\rho$   is a smooth point of the 
$\SLn$-representation variety $R_n(\pi_1M)$. Moreover,
$\rho$ is contained in a unique component of dimension
$n^2+n-2 - \dim H^0(\pi_1M;\sln_\rho)$.
\end{prop}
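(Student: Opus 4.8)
The plan is to show that every cocycle $u\in Z^1(\pi_1M;\sln_\rho)$ is integrable and then to invoke Artin's theorem to pass from formal deformations to an actual curve in $R_n(\pi_1M)$; counting dimensions will then give smoothness and the stated dimension of the unique component. First I would set $\Gamma=\pi_1M$, $M'=\partial M$ the boundary torus, and analyze the cohomology. By Poincar\'e--Lefschetz duality \eqref{eq:poincare} and the half-lives-half-dies phenomenon for the torus boundary, the image of $H^1(\Gamma;\sln_\rho)\to H^1(M';\sln_\rho)$ is a Lagrangian, so it has dimension $\tfrac12\dim H^1(M';\sln_\rho)$. Since $M'$ is a torus with abelian fundamental group $\Z^2$, one computes $\dim H^1(M';\sln_\rho)=2\dim H^0(M';\sln_\rho)$, and in the relevant situation $\dim H^0(M';\sln_\rho)$ will control things. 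The key input is the hypothesis $\dim H^1(\Gamma;\sln_\rho)=n-1$; combined with $\dim H^0(\Gamma;\sln_\rho)$ and the Euler characteristic $\chi(M)=0$ (so that the alternating sum of the $\dim H^i(\Gamma;\sln_\rho)$ vanishes, using $H^i=0$ for $i\geq 3$), one gets $\dim H^2(\Gamma;\sln_\rho) = n-1 - \dim H^0(\Gamma;\sln_\rho) + \dim H^0 = \dots$; more precisely $\dim H^2 = \dim H^1 - \dim H^0 + \dim H^0$... I would carefully redo this: $\chi = \dim H^0 - \dim H^1 + \dim H^2 = 0$ gives $\dim H^2(\Gamma;\sln_\rho) = \dim H^1(\Gamma;\sln_\rho) - \dim H^0(\Gamma;\sln_\rho) = (n-1) - \dim H^0(\Gamma;\sln_\rho)$.

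The heart of the argument is proving integrability of every $u_1\in Z^1(\Gamma;\sln_\rho)$. Here I would use the naturality of the obstruction classes $\zeta_{k+1}\in H^2(\Gamma;\sln_\rho)$ established in the Lemma above, together with the restriction map to the boundary torus. The strategy, following \cite{Heusener-Porti2005} and \cite{BenAbdelghani-Heusener-Jebali2010}, is: on the torus $T=\partial M$, all cocycles are known to be integrable (abelian groups have unobstructed deformation theory in a suitable sense — one builds the formal deformation explicitly, or uses that $\pi_1T=\Z^2$ is free abelian and the variety of commuting pairs is well understood). By naturality, $\iota^*\zeta_{k+1}^{(u_1,\dots,u_k)} = \zeta_{k+1}^{(\iota^*u_1,\dots,\iota^*u_k)}$, where $\iota\co T\hookrightarrow M$; if the partial deformation on $T$ has been chosen to extend the restricted one on $M$, the right-hand side vanishes. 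Thus each obstruction $\zeta_{k+1}$ lies in $\ker\bigl(\iota^*\co H^2(\Gamma;\sln_\rho)\to H^2(T;\sln_\rho)\bigr)$. By Poincar\'e--Lefschetz duality this kernel is dual to the cokernel of $H^1(M,\partial M;\sln_\rho)\to H^1(M;\sln_\rho)$, equivalently to the image of $H^1(M;\sln_\rho)\to H^1(T;\sln_\rho)$ — and a dimension count using the hypothesis $\dim H^1(\Gamma;\sln_\rho)=n-1$ forces $\iota^*\co H^2(\Gamma;\sln_\rho)\to H^2(T;\sln_\rho)$ to be injective. Hence $\zeta_{k+1}=0$ for all $k$, so $u_1$ is integrable.

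With integrability in hand, Artin's theorem \cite{Artin1968} guarantees that the formal solution is approximated by a genuine convergent (analytic, hence algebraic after Artin) family, so the tangent map $T_\rho R_n(\Gamma)\to Z^1(\Gamma;\sln_\rho)$ is surjective; combined with Weil's inclusion it is an isomorphism, so $\rho$ is a regular point in the sense defined before Lemma~\ref{lem:smoothness}. Lemma~\ref{lem:smoothness} then yields that $\rho$ is a smooth point lying on a unique component of dimension $\dim Z^1(\Gamma;\sln_\rho)$. Finally I compute $\dim Z^1 = \dim H^1 + \dim B^1 = \dim H^1(\Gamma;\sln_\rho) + \bigl(\dim\sln - \dim H^0(\Gamma;\sln_\rho)\bigr) = (n-1) + (n^2-1) - \dim H^0(\pi_1M;\sln_\rho) = n^2+n-2-\dim H^0(\pi_1M;\sln_\rho)$, which is exactly the claimed dimension.

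I expect the main obstacle to be the boundary-torus analysis: establishing cleanly that partial deformations of $\rho|_{\pi_1T}$ can be built to all orders and can be chosen compatibly with a prescribed partial deformation coming from $M$, so that naturality really does kill $\iota^*\zeta_{k+1}$. This requires a careful treatment of the deformation theory of $\Z^2$-representations into $\SLn$ (where, unlike the $\SLn[2]$ case, the centralizer $H^0$ can be larger and the component structure of the commuting variety is subtler), and the verification that $\iota^*$ is injective on $H^2$ under exactly the hypothesis $\dim H^1(\Gamma;\sln_\rho)=n-1$ — this is the sharp point where the numerical assumption is used, and getting the duality bookkeeping right (half-lives-half-dies, and the precise relation $\dim\,\mathrm{im}(\iota^*\colon H^1(M)\to H^1(T)) = \dim H^1(M) - \dim H^0(T) = \dots$) is where the delicate work lies.
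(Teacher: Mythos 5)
Your overall strategy is the paper's: kill the obstructions $\zeta_{k+1}$ by restricting to the boundary torus, use Poincar\'e--Lefschetz duality to show that $i^*\co H^2(\pi_1M;\sln_\rho)\to H^2(\pi_1\partial M;\sln_\rho)$ is injective, and finish with Artin's theorem and the count $\dim Z^1=\dim H^1+\dim\sln-\dim H^0$. The Euler-characteristic and duality bookkeeping (half-lives-half-dies, $\dim H^1(\partial M;\sln_\varrho)=2\dim H^0(\partial M;\sln_\varrho)$) and the final dimension computation are essentially right.

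The gap is in the boundary analysis, and it sits exactly where you flagged the difficulty. Your assertion that on the torus ``all cocycles are known to be integrable'' because abelian groups have unobstructed deformation theory is false for $n\geq 3$: $R_n(\Z\oplus\Z)$ has non-regular points, namely those $\varrho$ where the common centralizer of the two commuting images has dimension strictly greater than $n-1$, and at such points not every Zariski tangent vector is integrable. What your argument actually needs, and what the paper supplies, is the quantitative fact that for \emph{every} $\varrho\in R_n(\Z\oplus\Z)$ one has $\dim H^0(\Z\oplus\Z;\sln_\varrho)\geq n-1$, equivalently $\dim Z^1(\Z\oplus\Z;\sln_\varrho)\geq (n+2)(n-1)$, with equality if and only if $\varrho$ is a regular (hence smooth) point of $R_n(\Z\oplus\Z)$. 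This is not elementary; it rests on Richardson's theorem that the commuting variety $R_n(\Z\oplus\Z)$ is irreducible of dimension $(n+2)(n-1)$. The lower bound is indispensable twice. First, it closes the chain $n-1=\dim H^1(M;\sln_\rho)\geq\mathrm{rk}(\alpha)=\tfrac12\dim H^1(\partial M;\sln_\rho)=\dim H^0(\partial M;\sln_\rho)\geq n-1$, which is what forces $\beta$ to be surjective and hence $i^*$ injective on $H^2$; without it the dimension count you invoke does not force injectivity. Second, the resulting equality $\dim H^0(\partial M;\sln_\rho)=n-1$ is precisely what makes the restriction of $\rho$ to $\pi_1\partial M$ a smooth point of $R_n(\Z\oplus\Z)$, so that the formal implicit function theorem extends the restricted order-$k$ deformation to order $k+1$ and yields $i^*\zeta_{k+1}=0$. (A smaller slip: $\ker\bigl(i^*\co H^2(M)\to H^2(\partial M)\bigr)=\mathrm{coker}(\beta)$ is dual to $\ker(\alpha)=\mathrm{im}\bigl(H^1(M,\partial M)\to H^1(M)\bigr)$, not to the image of $H^1(M)\to H^1(\partial M)$ as you wrote.) So the skeleton is correct, but without the Richardson input both halves of the boundary step are unsupported.
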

\begin{proof}
First we will show that the map 
$i^*\co H^2(\pi_1 M;\sln_\rho)\to H^2(\pi_1 \partial M;\sln_{\rho})$ induced by the inclusion
$\partial M\hookrightarrow M$ is injective. 

Recall that for any CW-complex $X$ with
$\pi_1(X)\cong \pi_1(M)$ and for any $\pi_1 M$-module $A$ 
there are natural morphisms $H^i(\pi_1 M;A)\to H^i( X ;A)$ which are
isomorphisms
for $i=0,1$ and an injection for $i=2$ (see \cite[Lemma 3.3]{Heusener-Porti2005}).
Note also that $\partial M\cong S^1\times S^1$ is aspherical and hence
$H^*(\pi_1 \partial M;A)\to H^*( \partial M ;A)$ is an isomorphism.

First we will prove that for every representation $\varrho\in R_n(\Z\oplus\Z)$ we have
\begin{equation}\label{eq:boundary} 
\dim H^0(\Z\oplus\Z;\sln_\varrho) = \frac12 \dim H^1(\Z\oplus\Z;\sln_\varrho)\geq n-1\,.
\end{equation}
Moreover, we will prove  that $\varrho\in R_n(\Z\oplus\Z)$ is  regular if
and only if equality holds in~\eqref{eq:boundary}.
It follows from Poincar\'e duality \eqref{eq:poincare} that for every $\varrho\in R_n(\Z\oplus\Z)$ we have
\[ \dim H^0(\partial M; \sln_{\varrho})= \dim H^2(\partial M; \sln_{\varrho})\,, \]
and since the  Euler characteristic of $M$ vanishes we obtain the first equality in~\eqref{eq:boundary}:
\[
\dim H^1(\partial M; \sln_{\varrho}) = 2 \dim H^0(\partial M; \sln_{\varrho}) = 2 \dim \sln_\varrho^{\Z\oplus\Z}\,.
\]
Now, R.W.~Richardson proved in \cite[Thm.~C]{Richardson1979} that  the 
representation variety $R_n(\Z\oplus\Z)$ is an irreducible algebraic variety of dimension 
$(n+2)(n-1)$. 
Hence we obtain for every $\varrho\in R_n(\Z\oplus\Z)$ that
$$\dim Z^1(\partial M; \sln_{\varrho}) \geq (n+2)(n-1)=n^2+n-2$$
where the equality holds if and only if $\varrho$ is regular (see Lemma~\ref{lem:smoothness}).
At the same time, we have:
\begin{align*}
\dim Z^1(\partial M; \sln_{\varrho}) &= \dim H^1(\partial M; \sln_{\varrho}) + \dim B^1(\partial M; \sln_{\varrho})
\intertext{and the exactness of 
$0\to H^0(\partial M; \sln_{\varrho})\to\sln\to B^1(\partial M; \sln_{\varrho})\to 0$ gives}
\dim B^1(\partial M; \sln_{\varrho}) &= \dim \sln - \dim H^0(\partial M; \sln_{\varrho})\,.
\end{align*}
This together with $\dim H^1(\partial M; \sln_{\varrho}) = 2 \dim H^0(\partial M; \sln_{\varrho})$ gives
for all $\varrho\in R_n(\Z\oplus\Z)$:
\[
\dim Z^1(\partial M; \sln_{\varrho}) = \dim H^0(\partial M; \sln_{\varrho}) + n^2-1\geq n^2+n-2\,.
\]
It follows that  
\begin{equation}\label{eq:boundary2}
\dim H^0(\partial M; \sln_{\varrho})\geq n-1,\ \text{ for all $\varrho\in R_n(\Z\oplus\Z)$,} 
\end{equation}
and  $\varrho\in R_n(\Z\oplus\Z)$ is regular if and only if
$\dim H^0(\Z\oplus\Z;\sln_\varrho)=n-1$ (see also \cite{Popov2008}).

Now, the exact cohomology sequence of the pair $(M,\partial M)$ gives 
\begin{multline*}
\to H^1(M,\partial M; \sln_{\rho})\\ \to
H^1(M; \sln_{\rho})\xrightarrow{\alpha} H^1(\partial M; \sln_{\rho}) 
\xrightarrow{\beta} H^2(M,\partial M; \sln_{\rho})\\
\to H^2(M ; \sln_\rho)\xrightarrow{i^*}H^2(\partial M ; \sln_\rho)\to  
H^3(M,\partial M; \sln_{\rho})\to 0\,.
\end{multline*}
Poincar\'e-Lefschetz duality \eqref{eq:poincare} implies that $\alpha$ and $\beta$ are dual to each other.
This together with \eqref{eq:boundary2} gives:
\begin{multline*}
n-1=\dim H^1(M; \sln_{\rho})\geq \mathrm{rk} (\alpha) =\frac 1 2 
\dim H^1(\partial M; \sln_{\rho})\\= \dim H^0(\partial M; \sln_{\rho}) \geq n-1\,.
\end{multline*}
Therefore, $\dim H^0(\partial M;\sln_\rho)=n-1$ holds in Equation~\eqref{eq:boundary}, and consequently  
$i^*\rho=\rho\circ i_\# \in R_n(\partial M)$ is regular (here $i\colon\partial M\to M$ is the inclusion). 
Note also that $\beta$ is surjective, and hence
\[i^*\co H^2(M ; \sln_\rho)\to H^2(\partial M ; \sln_\rho)\] 
is injective. 
The following commutative diagram shows that 
$i^*\co H^2(\pi_1 M;\sln_\rho)\to H^2(\pi_1 \partial M;\sln_{\rho})$ is also injective:
$$
\begin{CD}
    H^2(M;\sln_{\rho}) @>{i^*}>> H^2(
\partial M;\sln_{\rho}) \\
    @AAA         @AA\cong A \\
    H^2(\pi_1 M;\sln_{\rho}) @>{i^*}>>
    H^2(\pi_1{\partial M};\sln_{\rho})\,.
\end{CD} $$

In order to prove that $\rho$ is a smooth point of $R_n(\pi_1M)$, we show that all cocycles in $Z^1(\pi_1 M, \sln_\rho)$ are integrable. 
In what follows we will prove that all obstructions vanish, by using the fact that the obstructions vanish on the boundary.
 Let 
$u_1,\ldots,u_k\co\pi_1M\to \sln$ be given such that 
\[\rho_k(\gamma)=\exp\left(\sum_{i=1}^kt^iu_i(\gamma)\right)\rho(\gamma)\]
 is a homomorphism modulo $t^{k+1}$. Then the restriction
 $i^*\rho_k\co\pi_1(\partial M)\to \SLn[n,\C\llbracket t\rrbracket]$ is also a formal deformation of order $k$.
Since $i^*\rho$ is a smooth point of the representation variety $R_n(\Z\oplus\Z)$, the formal implicit function theorem gives that $i^*\rho_k$ extends to a formal deformation of order $k+1$ (see \cite[Lemma~3.7]{Heusener-Porti-Suarez2001}). Therefore, we have that
\[0=\zeta_{k+1}^{(i^*u_1,\ldots,i^*u_k)}=i^*\zeta_{k+1}^{(u_1,\ldots,u_k)}\]
Now, $i^*$ is injective and the obstruction $\zeta_{k+1}^{(u_1,\ldots,u_k)}$ vanishes.

Hence all cocycles in $Z^1(\Gamma, \sln_\rho)$ are integrable. By applying Artin's theorem \cite{Artin1968} we obtain from a formal deformation of $\rho$ a convergent deformation (see \cite[Lemma~3.3]{Heusener-Porti-Suarez2001} or \cite[\S~4.2]{BenAbdelghani2000}).

Thus $\rho$ is a regular point of the representation variety $R_n(\pi_1M)$. 
Hence, $\dim H^1(\pi_1M;\sln_\rho)=n-1$ and the exactness of
\[0\to H^0(\pi_1M;\sln_\rho)\to\sln_\rho \to B^1(\pi_1M;\sln_\rho)\to 0\] implies
\[ \dim_\rho R_n(\pi_1M) = \dim Z^1(\pi_1M;\sln_\rho)
= n^2+n-2 - \dim H^0(\pi_1M;\sln_\rho)\,.\]
Finally, the proposition follows from Lemma~\ref{lem:smoothness}.
\end{proof}

\begin{prop}\label{prop:dimH1}
Let $K\subset S^3$ be a knot, $\lambda\in\C^*$ and $n\geq3$.
Suppose that $\lambda^2$ is a simple root of the Alexander polynomial $\Delta_K(t)$
and let $\rho_\lambda^z\co\Gamma_K\to \SLn[2]$ be a non-abelian representation as in 
\eqref{eq:redmetab}. 

If $\Delta_K(\lambda^{2i})\neq 0$ for $2\leq i \leq n-1$ then for
$\rho_{\lambda,n}^z:=\rep_n\circ\rho_\lambda^z\co\Gamma_K\to \SLn$ 
we have 
\[ \dim H^1(\Gamma_K ; \sln_{\rho_{\lambda,n}^z}) 
= (n-1) \text{ and } H^0(\Gamma_K ; \sln_{\rho_{\lambda,n}^z})=0\,.
\]
\end{prop}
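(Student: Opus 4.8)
The strategy is to decompose the $\Gamma_K$-module $\sln_{\rho_{\lambda,n}^z}$ using the $\SLn[2]$-representation theory of the adjoint action, compute the cohomology of each summand via Lemma~\ref{lem:H1GammaC} together with a long exact sequence coming from the filtration by the nilpotent part, and then add up the contributions. Concretely, under $\rep_n\co\SLn[2]\to\SLn$ the Lie algebra $\sln$ decomposes as an $\SLn[2]$-module (via $\Ad\circ\rep_n$) into a direct sum $\bigoplus_{j=1}^{n-1}V_{2j}$ of irreducible $\SLn[2]$-modules, where $V_{2j}$ has dimension $2j+1$; this is the classical Clebsch--Gordan decomposition of $\mathfrak{sl}(n)=\rep_n\otimes\rep_n^*\ominus\C$ restricted along $\rep_n$. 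So as a $\Gamma_K$-module, $\sln_{\rho_{\lambda,n}^z}=\bigoplus_{j=1}^{n-1}(V_{2j})_{\rho_\lambda^z}$, and it suffices to prove $H^0(\Gamma_K;(V_{2j})_{\rho_\lambda^z})=0$ and $\sum_j \dim H^1(\Gamma_K;(V_{2j})_{\rho_\lambda^z})=n-1$.

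**Key steps.** First I would record the module structure of each $(V_{2j})_{\rho_\lambda^z}$. The representation $\rho_\lambda^z$ is conjugate to an upper-triangular (in fact unipotent-times-diagonal) representation, so each $V_{2j}$ carries a $\Gamma_K$-invariant filtration whose associated graded pieces are the one-dimensional modules $\C_{\lambda^{2m}}$ for $m=-j,-j+1,\dots,j$. Equivalently, $(V_{2j})_{\rho_\lambda}$ (the diagonal case) is exactly $\bigoplus_{m=-j}^{j}\C_{\lambda^{2m}}$, and $\rho_\lambda^z$ is a cocycle deformation of $\rho_\lambda$. Second, using Lemma~\ref{lem:H1GammaC} and the hypothesis that $\lambda^2$ is a \emph{simple} root of $\Delta_K$ while $\Delta_K(\lambda^{2i})\neq0$ for $2\le i\le n-1$ (and $\Delta_K(1)\neq 0$, $\Delta_K(\lambda^{-2i})=\Delta_K(\lambda^{2i})$ up to units by symmetry of the Alexander polynomial), one sees that among the graded pieces $\C_{\lambda^{2m}}$, $-j\le m\le j$, only $m=\pm1$ can have nonzero $H^1$, each of dimension exactly $1$, and that $H^0$ vanishes for every piece with $m\neq 0$ while the piece $m=0$ is the trivial module $\C$. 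Third, I would run the long exact cohomology sequences associated to the short exact sequences of $\Gamma_K$-modules coming from the filtration of $(V_{2j})_{\rho_\lambda^z}$ to control $H^0$ and $H^1$ of the non-split module. For $H^0$: the only invariants could come from the trivial graded piece $m=0$, and one must check that the nonzero cocycle $z$ obstructs these invariants from lifting — this is where non-abelianness of $\rho_\lambda^z$ enters, and it gives $H^0(\Gamma_K;(V_{2j})_{\rho_\lambda^z})=0$, hence $H^0(\Gamma_K;\sln_{\rho_{\lambda,n}^z})=0$. For $H^1$: an Euler-characteristic count (the Euler characteristic of each coefficient module over $\Gamma_K$ is $0$ when the module has no invariants, by Lemma~\ref{lem:H1GammaC}-type reasoning, and the trivial piece contributes the knot-group Euler characteristic) combined with the vanishing/simplicity input pins down $\dim H^1(\Gamma_K;(V_{2j})_{\rho_\lambda^z})=1$ for each $j=1,\dots,n-1$, giving the total $n-1$.

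**Main obstacle.** The delicate point is the non-split extension bookkeeping: a priori the filtration could cause $H^1$ contributions from the two pieces $\C_{\lambda^{\pm2}}$ to cancel against connecting maps, or it could fail to produce any, so I must show the relevant connecting homomorphisms behave correctly — essentially that the extension class is "as non-degenerate as possible" given that $z$ generates $H^1(\Gamma_K;\C_{\lambda^2})$. I expect this to follow by an explicit analysis of the connecting maps in terms of cup products with the class of $z$, using that $\lambda^2$ is a simple root (so $H^1(\Gamma_K;\C_{\lambda^2})$ and $H^2(\Gamma_K;\C_{\lambda^2})$ are both $1$-dimensional and the pairing between them is perfect by Poincaré duality), together with the fact that the other twisted coefficients $\C_{\lambda^{2m}}$, $|m|\ge 2$, are acyclic. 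The $j=1$ summand $V_2=\sln[2]_{\rho_\lambda^z}$ is precisely the case treated in \cite{Heusener-Porti-Suarez2001}, which serves as the base case and as a sanity check that the count is $1$ there; the inductive/structural content is to propagate that computation through the larger summands $V_{2j}$ while keeping the acyclic pieces from interfering.
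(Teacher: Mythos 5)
Your overall strategy coincides with the paper's: decompose $\sln_{\rho_{\lambda,n}^z}\cong\bigoplus_{j=1}^{n-1}R_{2j}$ via Clebsch--Gordan, use the hypotheses to make the weight pieces $\C_{\lambda^{2m}}$ with $|m|\geq 2$ acyclic, and reduce each summand to the known computation $\dim H^1(\Gamma_K;\sln[2]_{\rho_\lambda^z})=1$, $H^0=0$ from \cite{Heusener-Porti-Suarez2001}. Where you differ is in the reduction step, and that is exactly where your flagged ``main obstacle'' lives. You filter $R_{2j}$ all the way down to one-dimensional graded pieces and must then control the connecting homomorphisms among the three middle pieces $\C_{\lambda^{2}},\C,\C_{\lambda^{-2}}$; an Euler-characteristic count only gives $h^0-h^1+h^2=0$ and cannot by itself rule out, say, $h^1=3$, $h^2=2$, so you would in effect have to redo the cup-product/extension-class analysis of \cite{Heusener-Porti-Suarez2001} inside each $R_{2j}$. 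The paper avoids this entirely: it peels off only the two \emph{extreme} weight lines of $R_{n-1}$ (the submodule $\C_{\chi_{n-1}}=\langle e_1^{(n-1)}\rangle$ and the quotient $\C_{\chi_{-n+1}}$) and exhibits an explicit $B_2$-module embedding $\phi_{n-3}\co R_{n-3}\to R_{n-1}/\langle e_1^{(n-1)}\rangle$, $e_l^{(n-3)}\mapsto \tfrac1l\,\bar e_{l+1}^{(n-1)}$, whose cokernel is the acyclic $\C_{\chi_{-n+1}}$. This yields $H^*(\Gamma_K;R_{n-1})\cong H^*(\Gamma_K;R_{n-3})$ under the hypotheses, and iterating identifies the entire middle of $R_{2j}$ with $R_{2j-2}$, and ultimately with $R_2\cong\sln[2]_{\rho_\lambda^z}$, \emph{as a $\Gamma_K$-module}; no new connecting-map analysis is needed beyond the quoted base case. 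You should either adopt this device or at least prove that the middle three-dimensional subquotient of $R_{2j}$ is isomorphic to $R_2$ before invoking the base case, since otherwise the hardest step of your argument is only asserted, not proved.

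Two smaller points need repair. First, the claim that ``only $m=\pm1$ can have nonzero $H^1$'' is false for $m=0$: the trivial piece has $H^1(\Gamma_K;\C)\cong\C$ by Lemma~\ref{lem:H1GammaC}, so the a priori bound on $\dim H^1(\Gamma_K;R_{2j})$ coming from the filtration is $3$, not $2$, which makes the required cancellations correspondingly more delicate. Second, acyclicity of the pieces with $|m|\geq2$ requires $\lambda^{2m}\neq1$ in addition to $\Delta_K(\lambda^{2m})\neq0$; this does follow from the hypotheses (if $\lambda^{2m}=1$ then $\lambda^{2(m-1)}=\lambda^{-2}$ is a root of $\Delta_K$ by symmetry, forcing $m=2$, hence $\lambda^2=\pm1$, which is impossible since $\Delta_K(1)=\pm1$ and $\Delta_K(-1)$ is odd), but it must be verified --- the paper devotes the first half of its proof of the proposition to exactly this check.
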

\begin{proof}
A proof of the cohomological calculation will be given in Section~\ref{sec:cohocalculation}.
\end{proof}

\begin{proof}[Proof of Theorem~\ref{thm:smoothR_n}]
It follows directly from Propositions~\ref{prop:smoothpoint} and \ref{prop:dimH1} that $\rho_{\lambda,n}^z$ is a smooth point of $R_n(\Gamma_K)$ which is contained in a unique component $R_{\lambda,n}\subset R_n(\Gamma_K)$,
$\dim R_{\lambda,n} = n^2+n-2$.

That $\rho_{\lambda,n}^z$ is the limit of irreducible representations which are contained in the component $R_{\lambda,n}$ follows from Proposition~\ref{prop:irred-exist}.
\end{proof}

\section{Cohomological calculations}
\label{sec:cohocalculation}

 For the convenience of the reader we recall some facts from the representation theory of $\SLn[2]$.
 The general reference for this topic is Springer's LNM \cite{Springer1977}.

\subsection{Representation theory of $\SLn[2]$}\label{sec:sl2} 
Let $V$ be an $n$-dimensional complex vector space. In what follows we will call a homomorphism 
$r\co\SLn[2]\to \GLn[V]$ an \emph{$n$-dimensional representation} of $\SLn[2]$. The vector space $V$ turns into an $\SLn[2]$-module. Two $n$-dimensional representations 
$r\co\SLn[2]\to \GLn[V]$ and
$r'\co\SLn[2]\to \GLn[V']$ are called \emph{equivalent} if there is an isomorphism 
$\phi \co V\to V'$ which commutes with the action of $\SLn[2]$ i.e.\
$r'(A)\, \phi = \phi\, r(A)$ for all $A\in\SLn[2]$. It is clear that equivalent representations give rise to isomorphic $\SLn[2]$-modules.

 We let $\SLn[2]$ act as a group of automorphisms on the polynomial algebra $R=\C[X,Y]$.
 If $\big(\begin{smallmatrix} a & b \\c & d\end{smallmatrix}\big)\in\SLn[2]$ then there is a unique automorphism $\rep\big(\begin{smallmatrix} a & b \\c & d\end{smallmatrix}\big)$ of $R$ given by
 \[ 
 \rep\big(\begin{smallmatrix} a & b \\c & d\end{smallmatrix}\big) (X) = d\,X - b\,Y
 \quad\text{ and }\quad
 \rep\big(\begin{smallmatrix} a & b \\c & d\end{smallmatrix}\big) (Y) = -c\,X +a\,Y\,.
 \]
 We let $R_{n-1}\subset R$ denote the $n$-dimensional subspace of homogeneous polynomials of degree $n-1$. The monomials $e_l^{(n-1)} = X^{l-1}Y^{n-l}$, $1\leq l\leq n$, form a basis of $R_{n-1}$ and 
 $\rep\big(\begin{smallmatrix} a & b \\c & d\end{smallmatrix}\big)$ leaves $R_{n-1}$ invariant.
  In what follows we will identify $R_{n-1}$ and $\C^n$ by fixing the basis 
 $(e_1^{(n-1)},\ldots,e_n^{(n-1)})$ of $R_{n-1}$.
We obtain an $n$-dimensional  representation 
 $\rep_n\co\SLn[2] \to \GLn[R_{n-1}]\cong \GLn $.

 The representation $r_n$ is \emph{rational} i.e.\ the coefficients of the matrix coordinates of 
 $\rep_n\big(A)$ are polynomials in the matrix coordinates of $A$. We will make use of the following theorem.
 \begin{theorem} \label{thm:sl2-rep}
 \begin{enumerate}
 \item\label{thm:sl2-rep.1} The representation $\rep_n$ is \emph{irreducible} i.e.\ there is no $\SLn[2]$-stable invariant subspace $V$, $\{0\}\subsetneq V\subsetneq R_{n-1}$ and any irreducible rational representation of $\SLn[2]$ is equivalent to some $\rep_n$.
  \item\label{thm:sl2-rep.2} For an arbitrary rational representation $r\co\SLn[2]\to\GLn[V]$ the
  $\SLn[2]$-module $V$ is isomorphic to a direct sum of $R_n$, 
  \[ V\cong \bigoplus_{d\geq 0} R_d^{m(k)}\,.\]
 \end{enumerate}
 \end{theorem}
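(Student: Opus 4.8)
The plan is to reduce both assertions to the classical weight-space combinatorics of $\sln[2]$, the only substantial input being complete reducibility. \emph{Step 1: $\rep_n$ is irreducible.} Differentiating the $\SLn[2]$-action on $R_{n-1}$ along the one-parameter subgroups generated by $H=\bigl(\begin{smallmatrix}1&0\\0&-1\end{smallmatrix}\bigr)$, $E=\bigl(\begin{smallmatrix}0&1\\0&0\end{smallmatrix}\bigr)$, $F=\bigl(\begin{smallmatrix}0&0\\1&0\end{smallmatrix}\bigr)$ turns $R_{n-1}$ into an $\sln[2]$-module; a direct computation from the defining formulas gives $H\cdot e_l^{(n-1)}=(n-2l+1)\,e_l^{(n-1)}$, so the monomials $e_1^{(n-1)},\dots,e_n^{(n-1)}$ are $H$-eigenvectors with pairwise distinct eigenvalues, while $E$ and $F$ act as shift operators $e_l^{(n-1)}\mapsto(\text{scalar})\,e_{l-1}^{(n-1)}$, resp.\ $e_l^{(n-1)}\mapsto(\text{scalar})\,e_{l+1}^{(n-1)}$, the scalar being nonzero except at the extreme index. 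A nonzero $\SLn[2]$-invariant subspace $V\subseteq R_{n-1}$ is $H$-invariant, hence a sum of the one-dimensional $H$-eigenspaces, i.e.\ spanned by a subset of the $e_l^{(n-1)}$; applying $E$ and $F$ repeatedly then forces $V=R_{n-1}$.

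\emph{Step 2: weights of an arbitrary rational representation.} Let $\rep\co\SLn[2]\to\GLn[V]$ be rational. Its restriction to the diagonal torus $T=\{\mathrm{diag}(t,t^{-1})\mid t\in\C^*\}$ is a rational representation of $\C^*$, hence a direct sum of characters $t\mapsto t^m$ with $m\in\Z$; write $V=\bigoplus_{m\in\Z}V_m$ for the resulting weight decomposition. Differentiating along $E$ and $F$ gives operators with $E(V_m)\subseteq V_{m+2}$, $F(V_m)\subseteq V_{m-2}$, and $H=[E,F]$ acting on $V_m$ by the scalar $m$. Finite-dimensionality provides a vector $v\in V_\lambda\setminus\{0\}$ with $Ev=0$ and $\lambda$ maximal among weights; the standard identity $EF^kv=k(\lambda-k+1)F^{k-1}v$ together with $F^kv=0$ for $k\gg0$ forces $\lambda\in\Z_{\geq0}$, and $v,Fv,\dots,F^\lambda v$ span an $(\lambda+1)$-dimensional $\SLn[2]$-submodule with weights $\lambda,\lambda-2,\dots,-\lambda$, which Step~1 identifies with $R_\lambda$.

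\emph{Step 3: complete reducibility and conclusion.} It remains to use that every rational representation of $\SLn[2]$ is a direct sum of irreducible subrepresentations. Granting this, Step~2 shows each irreducible summand is some $R_d$, which is assertion (2); applying assertion (2) to an irreducible $V$ shows $V\cong R_d$ for a unique $d$, which together with Step~1 gives assertion (1). Complete reducibility is the one genuinely non-formal ingredient, and I expect it to be the main obstacle. The cleanest route is Weyl's unitarian trick: a rational representation is holomorphic, so its restriction to the compact real form $\mathrm{SU}(2)$ preserves a Hermitian inner product obtained by averaging an arbitrary one over Haar measure, whence every $\mathrm{SU}(2)$-invariant subspace has an invariant complement; and since $\mathrm{SU}(2)$ is Zariski-dense in $\SLn[2]$, its invariant subspaces are precisely the $\SLn[2]$-invariant ones. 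Alternatively one invokes Weyl's theorem on complete reducibility for the semisimple Lie algebra $\sln[2]$, proved via the Casimir operator. Everything else is the weight bookkeeping of Steps~1--2, carried out in detail in \cite{Springer1977}.
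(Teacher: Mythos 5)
Your proof is correct: the weight-space argument for irreducibility of $R_{n-1}$, the highest-weight classification, and complete reducibility via the unitarian trick (or the Casimir) constitute the standard proof of this classical theorem. The paper itself offers no argument, only a citation to Lemma~3.1.3 and Proposition~3.2.1 of Springer's lecture notes, and what you have written is essentially the proof carried out in that reference.
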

 \begin{proof}
See Lemma~3.1.3 and Proposition~3.2.1 of \cite{Springer1977}.
 \end{proof}
 
 It is easy to see, and it follows also from the general theory, that $\rep_n$ maps an unipotent matrix
$ \big(\begin{smallmatrix} 1 & b \\0 & 1\end{smallmatrix}\big)$ and
$ \big(\begin{smallmatrix} 1 & 0 \\c & 1\end{smallmatrix}\big)$ onto an unipotent element of $\SLn[R_{n-1}]$.
Moreover, an explicit calculation shows that the image of a diagonal matrix is
the diagonal matrix $\rep_n\big(\mathrm{diag}(a,a^{-1})\big) = \mathrm{diag}(a^{n-1},a^{n-3},\ldots,a^{-n+3},a^{-n+1})$. 
Hence the image of $\rep_n$ is contained in $\SLn[R_{n-1}]\cong\SLn$.
\begin{example}
\label{ex:Ad=r3} The representation $r_1\colon \SLn[2]\to\SLn[1]=\{1\}$ is the trivial representation. 
The representation $\rep_2\co\SLn[2]\to\SLn[2]$ is equivalent to the identity:
\[
r_2 \begin{pmatrix} a & b \\c & d\end{pmatrix} =
\begin{pmatrix} a & -b \\-c & d\end{pmatrix}=
\begin{pmatrix} i & 0 \\ 0  & -i\end{pmatrix}
\begin{pmatrix} a & b \\c & d\end{pmatrix}
\begin{pmatrix} -i & 0 \\ 0  & i\end{pmatrix}\,.
\]
Moreover,
it is easy to see that the adjoint representation $\mathrm{Ad}\co\SLn[2]\to\mathrm{Aut}(\sln[2])$ is equivalent to $\rep_3$.
\end{example}

The Lie algebra $\sln$ of $\SLn$ turns into an $\SLn[2]$-module via $\mathrm{Ad}\circ\rep_n$ where
$\mathrm{Ad}\co\SLn\to\mathrm{Aut}(\sln)$ denotes the adjoint representation. For this action we have the classical  
formula of Clebsch--Gordan:
\begin{equation}\label{eq:Clebsch-Gordan}
\mathrm{Ad}\circ\rep_n \cong \bigoplus_{i=1}^{n-1} \rep_{2i+1} \,.
\end{equation}

Let $B_n\subset\SLn $ denote the Borel subgroup of upper triangular matrices.
The vector space $R_{n-1}$ turns into a $B_2$-module via the restriction of $\rep_n$ to $B_2$.
An explicit calculation gives
\begin{equation}\label{eq:action}
 \rep_n  \big(\begin{smallmatrix} \lambda & \lambda^{-1} b \\0 & \lambda^{-1}\end{smallmatrix}\big).\, e_l^{(n-1)}
= \lambda^{n-2l+1} \sum_{j=0}^{l-1} (-b)^j \binom{l-1}{j} e_{l-j}^{(n-1)}\,.
\end{equation}
Hence $\rep_n(B_2)$ is contained in $B_n\subset\SLn$ and  the one-dimensional vector space
$\langle e^{(n-1)}_1 \rangle$ is $B_2$ invariant:
$\rep_n  \big(\begin{smallmatrix} \lambda & \lambda^{-1} b \\0 & \lambda^{-1}\end{smallmatrix}\big).
e^{(n-1)}_1 = \lambda^{n-1} e^{(n-1)}_1$. 
For a given integer $i\in\Z$ we let $\chi_i\co B_2\to \C^* = \GLn[1,\C]$ denote the rational character given by 
\[ \chi_i\big(\begin{smallmatrix} \lambda & \lambda^{-1} b \\0 & \lambda^{-1}\end{smallmatrix}\big) =
\lambda^i\,.\]
Now $\C$ turns into a $B_2$-module via $\chi_i$ i.e.\
$\big(\begin{smallmatrix} \lambda & \lambda^{-1} b \\0 & \lambda^{-1}\end{smallmatrix}\big).x=
\lambda^i  \, x$ for $x\in\C$. We will denote this $B_2$-module by $\C_{\chi_i}$.
It follows that the $B_2$-module $\langle e^{(n-1)}_1 \rangle\in R_{n-1}$ is isomorphic to
$\C_{\chi_{n-1}}$ and we obtain a short exact sequence of $B_2$-modules
\begin{equation}\label{eq:seq1}
 0\to  \C_{\chi_{n-1}} \to R_{n-1} \to \bar R_{n-1}\to 1
\end{equation}
where $\bar R_{n-1}$ denotes the quotient $R_{n-1}/\langle e^{(n-1)}_1 \rangle$.  
For a given element $x\in R_{n-1}$ we let $\bar x \in\bar R_{n-1}$ denote the class represented by $x$ i.e.\ $\bar x = x + \langle e^{(n-1)}_1 \rangle$.

For abbreviation, we will drop the representation $r_n$ from the notation and write
for $x\in R_{n-1}$
\[ \big(\begin{smallmatrix} \lambda & \lambda^{-1} b \\0 & \lambda^{-1}\end{smallmatrix}\big).x
\text{ instead of } \rep_n\big(\begin{smallmatrix} \lambda & \lambda^{-1} b \\0 & \lambda^{-1}\end{smallmatrix}\big).\, x\,.\]

\begin{lemma}
The linear map $\phi_{n-3}\co R_{n-3}\to \bar R_{n-1}$ defined by
\[\phi_{n-3}(e_l^{(n-3)}) = \frac 1 l \, \bar e_{l+1}^{(n-1)}, \quad l=1,\ldots,n-2,\] 
is an injective $B_2$-module morphism i.e. for all $x\in R_{n-3}$ we have
\[  \big(\begin{smallmatrix} \lambda & \lambda^{-1} b \\0 & \lambda^{-1}\end{smallmatrix}\big).
\phi_{n-3}(x) =
\phi_{n-3}\Big( \big(\begin{smallmatrix} \lambda & \lambda^{-1} b \\0 & \lambda^{-1}\end{smallmatrix}\big). x \Big).
\]
\end{lemma}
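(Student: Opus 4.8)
The plan is to prove the two assertions separately, both by direct computation from the explicit action formula~\eqref{eq:action}.

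Injectivity requires essentially no computation: the classes $\bar e_2^{(n-1)},\dots,\bar e_n^{(n-1)}$ form a basis of $\bar R_{n-1}$, and $\phi_{n-3}$ carries the basis $e_1^{(n-3)},\dots,e_{n-2}^{(n-3)}$ of $R_{n-3}$ to the nonzero scalar multiples $\tfrac1l\bar e_{l+1}^{(n-1)}$, $1\le l\le n-2$, of the first $n-2$ of them. Hence $\phi_{n-3}$ is a linear isomorphism of $R_{n-3}$ onto the subspace $\langle \bar e_2^{(n-1)},\dots,\bar e_{n-1}^{(n-1)}\rangle$ of $\bar R_{n-1}$; in particular it is injective.

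For the $B_2$-equivariance, recall that every element of $B_2$ has the form $\big(\begin{smallmatrix}\lambda&\lambda^{-1}b\\0&\lambda^{-1}\end{smallmatrix}\big)$ with $\lambda\in\C^*$, $b\in\C$, and that both sides of the desired identity are linear in $x$; so it suffices to check
\[
\big(\begin{smallmatrix}\lambda&\lambda^{-1}b\\0&\lambda^{-1}\end{smallmatrix}\big).\phi_{n-3}\big(e_l^{(n-3)}\big)
=\phi_{n-3}\Big(\big(\begin{smallmatrix}\lambda&\lambda^{-1}b\\0&\lambda^{-1}\end{smallmatrix}\big).e_l^{(n-3)}\Big)
\]
for $l=1,\dots,n-2$. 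On the right I would apply~\eqref{eq:action} in degree $n-3$ (that is, with $n$ replaced by $n-2$) and then $\phi_{n-3}$, obtaining $\lambda^{\,n-2l-1}\sum_{j=0}^{l-1}(-b)^j\binom{l-1}{j}\tfrac1{l-j}\,\bar e_{l-j+1}^{(n-1)}$. On the left I would apply~\eqref{eq:action} to $e_{l+1}^{(n-1)}\in R_{n-1}$ and then pass to $\bar R_{n-1}$; the only term of that expansion involving $e_1^{(n-1)}$ is the one with $j=l$, and it vanishes in the quotient, so the left-hand side equals $\tfrac1l\lambda^{\,n-2l-1}\sum_{j=0}^{l-1}(-b)^j\binom{l}{j}\,\bar e_{l-j+1}^{(n-1)}$. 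Comparing the coefficients of $\bar e_{l-j+1}^{(n-1)}$, the two sides agree if and only if $\tfrac1{l-j}\binom{l-1}{j}=\tfrac1l\binom{l}{j}$, i.e.\ $l\binom{l-1}{j}=(l-j)\binom{l}{j}$, which holds for $0\le j\le l-1$.

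I do not expect a genuine obstacle here: the whole argument is bookkeeping, and the decisive point is the elementary binomial identity above. The only places requiring a little care are the degree shift in~\eqref{eq:action} — since that formula is stated for $R_{n-1}$ it must be re-indexed before being applied to $R_{n-3}$ — and the observation that the extra top term arising in the expansion of $\big(\begin{smallmatrix}\lambda&\lambda^{-1}b\\0&\lambda^{-1}\end{smallmatrix}\big).e_{l+1}^{(n-1)}$ is exactly $e_1^{(n-1)}$ and therefore disappears modulo $\langle e_1^{(n-1)}\rangle$; this cancellation is precisely what makes the two summation ranges on the two sides match.
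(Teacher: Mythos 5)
Your proof is correct and follows essentially the same route as the paper: both arguments reduce to basis vectors, expand the two sides via the explicit action formula~\eqref{eq:action}, discard the $\bar e_1^{(n-1)}$ term in the quotient, and conclude with the identity $l\binom{l-1}{j}=(l-j)\binom{l}{j}$. The injectivity argument is likewise the same basis observation.
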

\begin{proof}
The linear map $\phi_{n-3}$ is injective since the vectors $\bar e_l^{(n-1)}$, $2\leq l\leq n$, form a basis of $\bar R_{n-1}$. Now
\begin{align*}
\big(\begin{smallmatrix} \lambda & \lambda^{-1} b \\0 & \lambda^{-1}\end{smallmatrix}\big).\, 
\phi_{n-3}(e_l^{(n-3)}) &=
\lambda^{n-2l-1} \frac{1}{l} \sum_{j=0}^{l} (-b)^j \binom{l}{j} \bar e_{l-j+1}^{(n-1)}\,.
\intertext{Since $\binom l j (l-j) = l \binom{l-1} {j}$ and $\bar e_{1}^{(n-1)} =0$ it follows}
\big(\begin{smallmatrix} \lambda & \lambda^{-1} b \\0 & \lambda^{-1}\end{smallmatrix}\big).\, \phi_{n-3}(e_l^{(n-3)}) &=
\lambda^{(n-2)-2l+1} \sum_{j=0}^{l-1} (-b)^j \binom{l-1}{j}\frac{1}{l-j} \bar e_{l-j+1}^{(n-1)}\\
&= \phi_{n-3} \Big( \big(\begin{smallmatrix} \lambda & \lambda^{-1} b \\0 & \lambda^{-1}\end{smallmatrix}\big).\, e_l^{(n-3)} \Big)\,.
\end{align*}
Hence $\phi_{n-3}$ is a $B_2$-module morphism.
\end{proof}

\begin{lemma}
There is a short exact sequence of $B_2$-modules 
\begin{equation}\label{eq:seq2}
0\to  R_{n-3}\xrightarrow{\phi_{n-3}} \bar R_{n-1} \to \C_{\chi_{-n+1}}\to 0.
\end{equation}
\end{lemma}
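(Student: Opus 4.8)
The plan is to produce an explicit $B_2$-equivariant projection $\psi\colon \bar R_{n-1}\to \C_{\chi_{-n+1}}$ whose kernel is exactly the image of $\phi_{n-3}$, so that the sequence \eqref{eq:seq2} is exact by construction. Recall from \eqref{eq:action} that for $2\le l\le n$,
\[
\big(\begin{smallmatrix} \lambda & \lambda^{-1} b \\0 & \lambda^{-1}\end{smallmatrix}\big).\, \bar e_l^{(n-1)}
= \lambda^{n-2l+1} \sum_{j=0}^{l-1} (-b)^j \binom{l-1}{j}\, \bar e_{l-j}^{(n-1)}\, ,
\]
with the convention $\bar e_1^{(n-1)}=0$; in particular $\bar R_{n-1}$ has basis $\bar e_2^{(n-1)},\dots,\bar e_n^{(n-1)}$. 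Since $\phi_{n-3}$ sends $e_l^{(n-3)}$ to $\tfrac1l \bar e_{l+1}^{(n-1)}$ for $1\le l\le n-2$, its image is spanned by $\bar e_2^{(n-1)},\dots,\bar e_{n-1}^{(n-1)}$, i.e. it is the codimension-one subspace $W$ spanned by all basis vectors except $\bar e_n^{(n-1)}$. So the candidate quotient map is $\psi(\bar x) = (\text{coefficient of }\bar e_n^{(n-1)}\text{ in }\bar x)$.

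First I would check that $W=\operatorname{im}\phi_{n-3}$ is a $B_2$-submodule: from the displayed action, $\big(\begin{smallmatrix} \lambda & \lambda^{-1} b \\0 & \lambda^{-1}\end{smallmatrix}\big).\,\bar e_l^{(n-1)}$ is a combination of $\bar e_{l-j}^{(n-1)}$ with $j\ge 0$, hence of $\bar e_m^{(n-1)}$ with $m\le l$; for $l\le n-1$ all these indices are $\le n-1$, so $W$ is invariant. (This also re-proves that $\phi_{n-3}$ is a morphism, consistent with the previous lemma.) Next I would verify that $\psi$ is $B_2$-equivariant with target $\C_{\chi_{-n+1}}$: applying the action formula with $l=n$, the coefficient of $\bar e_n^{(n-1)}$ in $\big(\begin{smallmatrix} \lambda & \lambda^{-1} b \\0 & \lambda^{-1}\end{smallmatrix}\big).\,\bar x$ equals $\lambda^{n-2n+1}=\lambda^{-n+1}$ times the coefficient of $\bar e_n^{(n-1)}$ in $\bar x$ (the $j=0$ term; no higher $\bar e_m^{(n-1)}$ with $m>n$ exists to contribute), which is exactly the action of $\big(\begin{smallmatrix} \lambda & \lambda^{-1} b \\0 & \lambda^{-1}\end{smallmatrix}\big)$ on $\C_{\chi_{-n+1}}$. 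Thus $\psi$ is a surjective $B_2$-morphism with kernel $W=\operatorname{im}\phi_{n-3}$, and since $\phi_{n-3}$ is injective the sequence \eqref{eq:seq2} is exact.

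There is essentially no serious obstacle here; the only point requiring a little care is the bookkeeping with the convention $\bar e_1^{(n-1)}=0$ and making sure that in the formula for the action on $\bar e_n^{(n-1)}$ no term of index $>n$ appears, so that the induced action on the top coefficient is genuinely by the single scalar $\lambda^{-n+1}$ rather than being entangled with lower basis vectors modulo $W$. Once the equivariance of $\psi$ and the identification $\ker\psi=\operatorname{im}\phi_{n-3}$ are in place, exactness at all three spots is immediate.

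\begin{proof}
By \eqref{eq:action}, for $2\le l\le n$ one has (with the convention $\bar e_1^{(n-1)}=0$)
\[
\big(\begin{smallmatrix} \lambda & \lambda^{-1} b \\0 & \lambda^{-1}\end{smallmatrix}\big).\, \bar e_l^{(n-1)}
= \lambda^{n-2l+1} \sum_{j=0}^{l-1} (-b)^j \binom{l-1}{j}\, \bar e_{l-j}^{(n-1)}\,,
\]
and the vectors $\bar e_2^{(n-1)},\dots,\bar e_n^{(n-1)}$ form a basis of $\bar R_{n-1}$. Let
\[
W=\langle \bar e_2^{(n-1)},\dots,\bar e_{n-1}^{(n-1)}\rangle\subset\bar R_{n-1}\,.
\]
Since $\phi_{n-3}(e_l^{(n-3)})=\tfrac1l\,\bar e_{l+1}^{(n-1)}$ for $1\le l\le n-2$, the previous lemma shows that $\phi_{n-3}$ is an injective $B_2$-morphism with image $W$; in particular $W$ is a $B_2$-submodule of $\bar R_{n-1}$.

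Define $\psi\co\bar R_{n-1}\to\C$ by letting $\psi(\bar x)$ be the coefficient of $\bar e_n^{(n-1)}$ in the expansion of $\bar x$ in the basis $\bar e_2^{(n-1)},\dots,\bar e_n^{(n-1)}$. Then $\psi$ is linear, surjective, and $\ker\psi=W=\operatorname{im}\phi_{n-3}$. It remains to check that $\psi$ is a $B_2$-morphism onto $\C_{\chi_{-n+1}}$. Since $W$ is $B_2$-invariant, it suffices to compute on $\bar e_n^{(n-1)}$: by the displayed formula with $l=n$,
\[
\big(\begin{smallmatrix} \lambda & \lambda^{-1} b \\0 & \lambda^{-1}\end{smallmatrix}\big).\, \bar e_n^{(n-1)}
= \lambda^{-n+1}\,\bar e_n^{(n-1)} + (\text{terms in } W)\,,
\]
so $\psi\big(\big(\begin{smallmatrix} \lambda & \lambda^{-1} b \\0 & \lambda^{-1}\end{smallmatrix}\big).\,\bar x\big)=\lambda^{-n+1}\,\psi(\bar x)$ for all $\bar x\in\bar R_{n-1}$, which is the action of $\big(\begin{smallmatrix} \lambda & \lambda^{-1} b \\0 & \lambda^{-1}\end{smallmatrix}\big)$ on $\C_{\chi_{-n+1}}$. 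Hence
\[
0\to R_{n-3}\xrightarrow{\phi_{n-3}}\bar R_{n-1}\xrightarrow{\psi}\C_{\chi_{-n+1}}\to 0
\]
is a short exact sequence of $B_2$-modules.
\end{proof}
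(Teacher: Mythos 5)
Your proof is correct and follows essentially the same route as the paper: the paper's argument is precisely the one-line observation that, by \eqref{eq:action}, $\big(\begin{smallmatrix} \lambda & \lambda^{-1} b \\0 & \lambda^{-1}\end{smallmatrix}\big).\, e^{(n-1)}_n \equiv \lambda^{-n+1} e^{(n-1)}_n$ modulo $\langle e_1^{(n-1)},\ldots,e_{n-1}^{(n-1)} \rangle$, which is exactly your computation of $\psi$ on $\bar e_n^{(n-1)}$. You have simply spelled out the surrounding bookkeeping (identifying $\operatorname{im}\phi_{n-3}$ with the span of $\bar e_2^{(n-1)},\dots,\bar e_{n-1}^{(n-1)}$ and checking equivariance of the quotient map) that the paper leaves implicit.
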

\begin{proof}
Again the lemma follows from Equation~\eqref{eq:action}:
\[ \big(\begin{smallmatrix} \lambda & \lambda^{-1} b \\0 & \lambda^{-1}\end{smallmatrix}\big). e^{(n-1)}_n
\equiv \lambda^{-n+1} e^{(n-1)}_n \bmod \langle e_1^{(n-1)},\ldots,e_{n-1}^{(n-1)} \rangle\,. \qedhere\]
\end{proof}

Let us fix a representation $\rho_\lambda^z\co\Gamma_K\to B_2$.
Then $R_n$ turns into a $\Gamma_K$-module and the exact sequences \eqref{eq:seq1} and \eqref{eq:seq2} are exact sequences of $\Gamma_K$-modules. Note that
$\C_{\chi_k}\cong \C_{\lambda^k}$ since for all $\gamma\in\Gamma_K$ and $k\in\Z$ the equation
$\chi_k\big(\rho_\lambda^z(\gamma)\big)=\lambda^{k\varphi(\gamma)}$ holds.

\begin{lemma}\label{lem:dim-H1Rn} Let $\lambda\in\C^*$, $\lambda\ne 1$, and $n>3$ be given.
If $\Delta_K(\lambda^{n-1})\neq 0$ and if $\lambda^{n-1}\neq 1$ then 
\[H^*(\Gamma_K; R_{n-1})\cong H^*(\Gamma_K; R_{n-3})\,.\]
\end{lemma}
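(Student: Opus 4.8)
The plan is to exploit the two short exact sequences of $\Gamma_K$-modules
\eqref{eq:seq1} and \eqref{eq:seq2}, namely
\[
0\to \C_{\lambda^{n-1}}\to R_{n-1}\to \bar R_{n-1}\to 0
\quad\text{and}\quad
0\to R_{n-3}\xrightarrow{\phi_{n-3}} \bar R_{n-1}\to \C_{\lambda^{-n+1}}\to 0,
\]
together with the long exact cohomology sequences they induce. The hypotheses $\Delta_K(\lambda^{n-1})\ne0$ and $\lambda^{n-1}\ne1$ are tailored so that, by Lemma~\ref{lem:H1GammaC}, the modules $\C_{\lambda^{n-1}}$ and $\C_{\lambda^{-n+1}}$ (note $\Delta_K(t)\doteq\Delta_K(t^{-1})$, so $\Delta_K(\lambda^{-n+1})\ne0$ as well, and $\lambda^{-n+1}\ne1$) both have vanishing cohomology in all degrees: $H^k(\Gamma_K;\C_{\lambda^{\pm(n-1)}})=0$ for $k=0,1,2$ and trivially for $k\ge3$.

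First I would feed $\C_{\lambda^{-n+1}}$ into the long exact sequence of \eqref{eq:seq2}: since all its cohomology vanishes, the connecting maps force $\phi_{n-3}^*\co H^k(\Gamma_K;R_{n-3})\to H^k(\Gamma_K;\bar R_{n-1})$ to be an isomorphism for every $k$. Next I would feed $\C_{\lambda^{n-1}}$ into the long exact sequence of \eqref{eq:seq1}: since its cohomology vanishes in all degrees, the projection $R_{n-1}\to\bar R_{n-1}$ induces an isomorphism $H^k(\Gamma_K;R_{n-1})\to H^k(\Gamma_K;\bar R_{n-1})$ for every $k$. Composing these two chains of isomorphisms yields $H^k(\Gamma_K;R_{n-1})\cong H^k(\Gamma_K;R_{n-3})$ for all $k$, which is the assertion. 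One should also note that $R_{n-3}$ makes sense here because $n>3$; for $n=3$ the module $R_0=\C$ would enter instead, but that case is excluded.

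I do not expect any genuine obstacle: the argument is a routine diagram chase once Lemma~\ref{lem:H1GammaC} has been invoked to kill the cohomology of the two rank-one modules. The only point that requires a word of care is the identification $\C_{\chi_k}\cong\C_{\lambda^k}$, which was already recorded just before the statement, and the symmetry $\Delta_K(t)\doteq\Delta_K(t^{-1})$ of the Alexander polynomial, which guarantees that the hypothesis on $\lambda^{n-1}$ also controls $\lambda^{-n+1}$. With these in hand the proof is essentially immediate.
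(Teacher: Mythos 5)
Your argument is correct and is essentially identical to the paper's proof: both use the long exact cohomology sequences of the two short exact sequences \eqref{eq:seq1} and \eqref{eq:seq2}, invoke Lemma~\ref{lem:H1GammaC} together with the symmetry of $\Delta_K(t)$ to kill the cohomology of $\C_{\lambda^{\pm(n-1)}}$, and compose the resulting isomorphisms through $H^*(\Gamma_K;\bar R_{n-1})$. No gaps to report.
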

\begin{proof}
The long exact cohomology sequences \cite[III.\S6]{Brown1982}
associated to the short exact sequences \eqref{eq:seq1} gives:
\[ 
H^k(\Gamma_K;\C_{\lambda^{n-1}})\to H^k(\Gamma_K;R_{n-1})\to
H^k(\Gamma_K;\bar R_{n-1}) \to H^{k+1}(\Gamma_K;\C_{\lambda^{n-1}})
\]
is exact for $k=0,1,2$. 
Now $H^0(\Gamma_K;\C_{\lambda^{n-1}})=0$ since $\lambda^{n-1}\neq 1$ and for $k=1,2$ the group
$H^k(\Gamma_K;\C_{\lambda^{n-1}})=0$  since $\Delta_K(\lambda^{n-1})\neq 0$ 
(see Lemma~\ref{lem:H1GammaC}). Hence
\[
 H^k(\Gamma_K;R_{n-1})\xrightarrow{\cong} H^k(\Gamma_K;\bar R_{n-1})
 \quad\text{ for $k=0,1,2$.}
\]

Finally, the short exact sequence~\eqref{eq:seq2}, Lemma~\ref{lem:H1GammaC} and the 
assumptions $\Delta_K(\lambda^{n-1})\neq 0$ with $\lambda^{n-1}\neq 1$ give that
\[
 H^k(\Gamma_K;R_{n-3})\xrightarrow{\cong} H^k(\Gamma_K;\bar R_{n-1}) 
 \quad\text{ for $k=0,1,2$}
\]
are isomorphisms (note that $\Delta_K(t)$ is symmetric).
\end{proof}

\begin{prop}\label{prop:reducedimH1}
Let $\lambda\in\C^*$ such that $\Delta_K(\lambda^2)=0$, $n\geq3$ and 
$\rho_\lambda^z\co\Gamma_K\to B_2$ be given as in \eqref{eq:redmetab}. 
If $\Delta_K(\lambda^{2k})\neq 0$ and $\lambda^{2k}\neq 1$ for $2\leq k \leq n-1$ 
then for
$\rho_{\lambda,n}^z:=\rep_n\circ\rho_\lambda^z\co\Gamma_K\to B_n\subset\SLn$ 
we have 
\[ \dim H^*(\Gamma_K; \sln_{\rho_{\lambda,n}^z}) 
= (n-1)\dim H^*(\Gamma_K; R_2 )\,.
\]
\end{prop}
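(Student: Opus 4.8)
The plan is to combine the Clebsch--Gordan decomposition \eqref{eq:Clebsch-Gordan} with Lemma~\ref{lem:dim-H1Rn}. First I would compose the $\SLn[2]$-equivariant isomorphism $\Ad\circ\rep_n\cong\bigoplus_{i=1}^{n-1}\rep_{2i+1}$ with $\rho_\lambda^z\co\Gamma_K\to\SLn[2]$; this turns it into an isomorphism of $\Gamma_K$-modules
\[
\sln_{\rho_{\lambda,n}^z}\;\cong\;\bigoplus_{i=1}^{n-1} R_{2i}\,,
\]
where each summand $R_{2i}$ carries the $\Gamma_K$-action induced by $\rep_{2i+1}\circ\rho_\lambda^z$ --- precisely the module structure appearing in Lemma~\ref{lem:dim-H1Rn} when its parameter ``$n$'' is specialized to $2i+1$. (For $i=1$ this is $R_2\cong\sln[2]_{\rho_\lambda^z}$ by Example~\ref{ex:Ad=r3}.) Since group cohomology commutes with finite direct sums of coefficient modules, we obtain for every $k$
\[
H^k(\Gamma_K;\sln_{\rho_{\lambda,n}^z})\;\cong\;\bigoplus_{i=1}^{n-1}H^k(\Gamma_K;R_{2i})\,.
\]

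The next step is to show $H^*(\Gamma_K;R_{2i})\cong H^*(\Gamma_K;R_2)$ for every $1\leq i\leq n-1$. The case $i=1$ is trivial. For $i\geq2$ I apply Lemma~\ref{lem:dim-H1Rn} with its ``$n$'' equal to $2i+1>3$: the hypotheses $\Delta_K(\lambda^{2i})\neq0$ and $\lambda^{2i}\neq1$ are exactly the standing assumptions for $2\leq i\leq n-1$, and the lemma's requirement $\lambda\neq1$ holds automatically because $\Delta_K(1)=\pm1\neq0$ forces $\lambda^2\neq1$. Hence $H^*(\Gamma_K;R_{2i})\cong H^*(\Gamma_K;R_{2i-2})$, and iterating this from $i=n-1$ down to $i=2$ produces a chain of isomorphisms $H^*(\Gamma_K;R_{2(n-1)})\cong\cdots\cong H^*(\Gamma_K;R_4)\cong H^*(\Gamma_K;R_2)$.

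Combining the two steps yields $\dim H^k(\Gamma_K;\sln_{\rho_{\lambda,n}^z})=(n-1)\dim H^k(\Gamma_K;R_2)$ for every $k$, and summing over $k$ gives the claimed equality of total dimensions. I do not expect a genuine obstacle here: the argument is purely formal once \eqref{eq:Clebsch-Gordan} and Lemma~\ref{lem:dim-H1Rn} are in hand. The only delicate points are the index bookkeeping when invoking the lemma --- one must check that the exponent entering its hypothesis is exactly $\lambda^{2i}$ with $2\leq i\leq n-1$ --- and the treatment of the base case $i=1$ and of $n=3$, both of which are immediate.
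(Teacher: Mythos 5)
Your proof is correct and takes the same route as the paper: the Clebsch--Gordan decomposition $\sln_{\rho_{\lambda,n}^z}\cong\bigoplus_{i=1}^{n-1}R_{2i}$ as $\Gamma_K$-modules, followed by Lemma~\ref{lem:dim-H1Rn} to identify each $H^*(\Gamma_K;R_{2i})$ with $H^*(\Gamma_K;R_2)$. The paper states that reduction in a single line; your explicit downward iteration from $i=n-1$ to $i=2$ and the check that $\lambda\neq1$ are exactly the details it leaves implicit.
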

\begin{proof}
It follows from \eqref{eq:Clebsch-Gordan} that we have an isomorphism of $\Gamma_K$-modules:
\[ \sln_{\rho_{\lambda,n}^z} \cong \bigoplus_{k=1}^{n-1} R_{2k}\,.\]
Now Lemma~\ref{lem:dim-H1Rn} implies that
$\dim H^*(\Gamma_K, R_{2k}) = \dim H^*(\Gamma_K, R_{2})$
since $\Delta_K(\lambda^{2k})\neq 0$ and $\lambda^{2k}\neq 1$ for $2\leq k\leq n-1$.
Hence the assertion of the proposition follows.
\end{proof}

\begin{proof}[Proof of Proposition~\ref{prop:dimH1}]
Let  $\lambda\in\C^*$ and $n\in \Z$, $n\geq3$.
Suppose that $\lambda^2$ is a simple root of the Alexander polynomial $\Delta_K(t)$
and let $\rho_\lambda^z\co\Gamma_K\to B_2$ be a non-abelian representation as in~\eqref{eq:redmetab}. 

In order to apply Proposition~\ref{prop:reducedimH1} we have to show that $\lambda^{2k}\neq 1$ for $2\leq k \leq n-1$.
Suppose that there exists $k\in\Z$, $2\leq k \leq n-1$, such that $\lambda^{2k}= 1$. 
Next note that  $\lambda^{-2} = \lambda^{2k-2}$ is a root of the Alexander polynomial since $\Delta_K(t)$ is symmetric.
Therefore the assumption of the proposition implies that $k=2$ i.e.\ $\lambda^4=1$ and hence $\lambda^2 = \pm 1$.
At the same time, $\pm1$ is not a root of $\Delta_K(t)$ since $\Delta_K(1)=\pm1$ and $\Delta_K(-1)$ is an odd integer.
This gives a contradiction and hence $\lambda^{2k}\neq 1$ for $2\leq k \leq n-1$.
Therefore, Proposition~\ref{prop:reducedimH1} implies that 
\[ \dim H^*(\Gamma_K; \sln_{\rho_{\lambda,n}^z}) = (n-1)\dim H^*(\Gamma_K; R_2 )\,.\]

Finally, observe that $\sln[2]_{\rho_\lambda^z}\cong R_2$ (see Example~\ref{ex:Ad=r3}) and 
$\dim H^1(\Gamma_K; R_2 )=1$ follows from \cite[Corollary~5.4]{Heusener-Porti2005} or \cite[4.4]{Heusener-Porti-Suarez2001}.
It is easy to see that $H^0(\Gamma_K; R_2 )=0$ since $\rho_\lambda^z$ is non-abelian.
\end{proof}

\section{Examples}\label{sec:examples}
Let $K\subset S^3$ be a knot and $\lambda^2$ a simple root of $\Delta_K(t)$.
Theorem~\ref{thm:smoothR_n} implies that
if $\Delta_K(\lambda^{2k})\neq 0$ for all $k\in\Z$, $k\neq\pm 1$, then for all $n\geq 2$, $n\in\Z$, the representation space 
$R_n(\Gamma_K)$ contains a component $R_{\lambda,n}$ of dimension $n^2+n-2$. 
 Moreover,
Proposition~3.8 of \cite{Newstead1978} shows that if a component contains an irreducible representation, then generic representations on that component are irreducible.

%

\begin{corol}
Let $K\subset S^3$ be a knot with the Alexander polynomial of the figure-eight knot.

Then the representation variety $R_n(\Gamma_K)$ contains an $(n^2+n-2)$-dimensional component and the irreducible representations form an Zariski-open subset of this component.
\end{corol}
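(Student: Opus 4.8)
The plan is to reduce the corollary to a direct application of Theorem~\ref{thm:smoothR_n} together with the remark at the beginning of this section. Write $\Delta_K(t)=t^2-3t+1$ (up to a unit in $\C[t^{\pm1}]$) for the Alexander polynomial of the figure-eight knot. Its roots are $\tfrac{3\pm\sqrt5}{2}$: they are real, positive, distinct, and mutually inverse. In particular each is a simple root, so I may fix $\lambda\in\C^*$ with $\lambda^2=\tfrac{3+\sqrt5}{2}$; then the roots of $\Delta_K$ are precisely $\lambda^{2}$ and $\lambda^{-2}$.

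Next I would verify the numerical hypothesis of Theorem~\ref{thm:smoothR_n}, namely that $\Delta_K(\lambda^{2k})\neq0$ for every $k\in\Z$ with $k\neq\pm1$ (in particular for $2\leq k\leq n-1$ and all $n$). If $\Delta_K(\lambda^{2k})=0$ then $\lambda^{2k}\in\{\lambda^{2},\lambda^{-2}\}$, hence $\lambda^{2(k-1)}=1$ or $\lambda^{2(k+1)}=1$. But $\lambda^2=\tfrac{3+\sqrt5}{2}>1$ is a real number exceeding $1$, so it is not a root of unity and $\lambda^{2m}=1$ forces $m=0$; thus $k\in\{1,-1\}$, a contradiction. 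This elementary observation — that the real root $\tfrac{3+\sqrt5}{2}$ of $\Delta_K$ is not a root of unity — is really the only thing to check, and it is the point I would flag as the (very mild) main obstacle.

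With this in hand, Theorem~\ref{thm:smoothR_n} applies for every $n\geq2$: the metabelian representation $\rho_{\lambda,n}^z=\rep_n\circ\rho_\lambda^z$ is a smooth point of $R_n(\Gamma_K)$, it lies in a unique (hence irreducible) component $R_{\lambda,n}\subset R_n(\Gamma_K)$ of dimension $(n+2)(n-1)=n^2+n-2$, and by Proposition~\ref{prop:irred-exist} it is a limit of irreducible representations contained in $R_{\lambda,n}$. Thus $R_{\lambda,n}$ is an $(n^2+n-2)$-dimensional component of $R_n(\Gamma_K)$ containing an irreducible representation. Since irreducibility is a Zariski-open condition, the irreducible representations form a Zariski-open subset of $R_{\lambda,n}$, which is non-empty and therefore dense by irreducibility of the component; this is exactly the content of Proposition~3.8 of \cite{Newstead1978} quoted above. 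This completes the proof.
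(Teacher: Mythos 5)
Your proof is correct and takes essentially the same route as the paper: both reduce the corollary to Theorem~\ref{thm:smoothR_n} and Proposition~\ref{prop:irred-exist} via the observation that the roots $\tfrac{3\pm\sqrt{5}}{2}$ of $t^2-3t+1$ are simple and that no power $\lambda^{\pm 2k}$ with $k\neq\pm1$ is again a root. Your explicit verification of that last point (using that $\lambda^2>1$ is real, hence not a root of unity) spells out what the paper merely asserts in one line, but the substance is identical.
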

\begin{proof}
The Alexander polynomial of the figure-eight knot is $\Delta(t)=t^2-3t+1$ and its roots are
$\lambda^{\pm 2} = 3/2 \pm \sqrt{5}/2$ and no power $\lambda^{\pm 2k}$, $k\neq \pm1$, is a root of 
$\Delta(t)$.
\end{proof}

The situation for the trefoil knot $3_1$ is more complicated since the roots of its Alexander polynomial, $\Delta_{3_1}(t)=t^2-t+1$, are the primitive $6$-th roots of unity $\lambda^{\pm 2} = e^{\pm i\pi/ 3}$.
Hence $R_n(\Gamma_{3_1})$ contains  an $(n^2+n-2)$-dimensional component 
$R_{\lambda,n}$, for $n\in\{2,3,4,5\}$,  since $e^{\pm i\pi/ 3}$ is a simple root of $\Delta_{3_1}(t)$ and since $\Delta_{3_1}(e^{\pm i {k\pi}/3})\neq0$, for $k\in \{2,3,4\}$. 

Let us study the case $n=6$:
the group $\Gamma_{3_1}$ is free product with amalgamation
\[
\Gamma_{3_1} = \langle S,T \mid STS=TST\rangle\cong\langle x,y \mid x^2 = y^3 \rangle\cong
\langle x\mid -\rangle *_{\langle c\mid -\rangle} \langle y\mid -\rangle
\]
where $x=STS$, $y=TS$, and $c=x^2=y^3$ generates the center of $\Gamma_{3_1} $.
Note that a meridian $\mu$ of $3_1$ is represented by the Wirtinger generator
$\mu=S=xy^{-1}$.
Let $\rho\co\Gamma_{3_1}\to\SLn[6]$ be an irreducible representation. 
It follows from 
Schur's lemma, that if $\rho$ is irreducible then the generator of the center $x^2=c=y^3$ has to be mapped into the center 
\[
C_6 := \{ \exp(2\pi \frac{k}{6} ) I_6 \mid 1 \leq k \leq 6\}\subset \SLn[6]
\]
 of $\SLn[6]$. Notice that for each element of the center $C_6$ there are only finitely many square and  cube roots up to conjugation in $\SLn[6]$.  This implies that if $R\subset R_6(\Gamma_{3_1})$ is an irreducible component of the representation variety then the conjugacy classes represented by the elements $\rho(c)$,
$\rho(x)$, $\rho(y)$ in $\SLn[6]$ do not vary with $\rho\in R$. Now let $\lambda = e^{ i\pi/ 6}$ be a primitive $12$-th root of unity. A cohomological non-trivial cocycle 
$z\in Z^1(\Gamma_{3_1}; \C_{\lambda^2})$ is given by $z(S)=0$ and $z(T)=1$.
Therefore the representation $\rho_{\lambda}^z\co\Gamma_{3_1}\to\SLn[2]$ is given by
\[
\rho_{\lambda}^z(S) = \begin{pmatrix} \lambda & 0\\ 0 &\lambda^{-1}\end{pmatrix}
\quad\text{ and }\quad
\rho_{\lambda}^z(T) = \begin{pmatrix} \lambda & \lambda^{-1}\\ 0 &\lambda^{-1}\end{pmatrix}\,.
\]
Hence
\[
\rho_{\lambda}^z(x) = \begin{pmatrix} i & \lambda^{-1} \\ 0 &-i\end{pmatrix}
, \quad
\rho_{\lambda}^z(y) = \begin{pmatrix} \lambda^2 & \lambda^{-2}\\ 0 &\lambda^{-2}\end{pmatrix}
\quad\text{ and }\quad \rho_{\lambda}^z(c) = -I_2\,. 
\]
Proposition~\ref{prop:irred-exist} implies that $\rho_{\lambda,6}^z = r_6\circ \rho_{\lambda}^z$ is a limit of irreducible representations. Computer supported calculations show that 
$\dim H^1(\Gamma_{3_1}; R_{10})=3$ and Lemma~\ref{lem:dim-H1Rn} implies that
$\dim H^1(\Gamma_{3_1}; R_{2k})=\dim H^1(\Gamma_{3_1}; R_{2})=1$ for $k\in\{2,3,4\}$.
Hence Formula~\ref{eq:Clebsch-Gordan} implies that
\[ \dim H^1(\Gamma_{3_1}; \sln[6]_{\rho_{\lambda,6}^z}) = 7 \quad\text{ i.e.\ }\quad Z^1(\Gamma_{3_1}; \sln[6]_{\rho_{\lambda,6}^z})=42\,.\]
In order to see that $\rho_{\lambda,6}^z$ is contained in a $42$-dimensional component of $R_6(\Gamma_{3_1})$ we proceed as follows:
let 
$A=\rho_{\lambda,6}^z(x)$ and $B=\rho_{\lambda,6}^z(y)$ denote the image of $x$ and $y$ respectively. Notice that the matrices $A$ and $B$ are conjugate to
$\rep_6 \big(\begin{smallmatrix} i & 0 \\ 0 & -i\end{smallmatrix}\big)$ and 
$\rep_6 \big(\begin{smallmatrix} \lambda^2 & 0 \\ 0 & \lambda^{-2}\end{smallmatrix}\big)$. Hence
\[
A \sim
\begin{pmatrix}
i & &&&\mathbf{0}& \\ &i &&&&\\ & &i&&&\\ & &&-i&&\\ & &&&-i&\\ &\mathbf{0}&&&&-i\end{pmatrix}
\ \text{ and }\ B\sim
\begin{pmatrix}
-1 & &&&\mathbf{0}& \\ &-1 &&&&\\ & &\lambda^2&&&\\ & &&\lambda^2&&\\ & &&&\lambda^{-2}&\\ &\mathbf{0}&&&&\lambda^{-2}\end{pmatrix}.
\] 

Further note that a choice of  eigenspaces $E_A(i)$, $E_A(-i)$, $E_B(-1)$, $E_B(\lambda^2)$,
$E_B(\lambda^{-2})$ such that $E_A(i)\oplus E_A(-i)\cong \C^6$ and
$E_B(-1)\oplus E_B(\lambda^2)\oplus E_B(\lambda^{-2})\cong \C^6$ determines a representation
$\rho\co\Gamma_{3_1}\to\SLn[6]$ completely. 

Let $\mathrm{Gr}(p,n)$ denote the Grassmannian which parametrizes all $p$-dimensional subspaces of $\C^n$. 
Hence the choice of two elements in $\mathrm{Gr}(3,6)$ in generic position determines $A$ and 
the choice of three elements in $\mathrm{Gr}(2,6)$ in generic position determines $B$.
The representation will be irreducible if the eigenspaces of $A$ and $B$ are  in general position and reducible if not. 

It is well known that
$\dim \mathrm{Gr}(p,n)= p(n-p)$ and hence 
\[\dim\big(\mathrm{Gr}(3,6)\times\mathrm{Gr}(3,6)\big)=18
\quad\text{ and }\quad
\dim\big(\mathrm{Gr}(2,6)\times\mathrm{Gr}(2,6)\times\mathrm{Gr}(2,6)\big)=24\,.
\]
 Therefore, we constructed a $42$-dimensional component of representations $C\subset R_6(\Gamma_{3_1})$ which contains
 $\rho_{\lambda,6}^z = r_6\circ \rho_{\lambda}^z$ and which also contains irreducible representations.
 Note that $6^2+6-2=40<42$.
 In conclusion we have:
 \begin{corol}
 The representation variety $R_6(\Gamma_{3_1})$ contains a $42$-dimensional component $C$. The generic representation of $C$ is irreducible and $\rho_{\lambda,6}^z\in C\subset R_6(\Gamma_{3_1})$ is a smooth point.
 \end{corol}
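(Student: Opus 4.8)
The plan is to combine two ingredients that are essentially already in place in the discussion above: the cocycle count $\dim Z^1(\Gamma_{3_1};\sln[6]_{\rho_{\lambda,6}^z})=42$, and the explicit $42$-dimensional family of representations through $\rho_{\lambda,6}^z$. Once these are in hand, the general inequality $\dim_{\rho_{\lambda,6}^z}R_6(\Gamma_{3_1})\le\dim Z^1(\Gamma_{3_1};\sln[6]_{\rho_{\lambda,6}^z})$ forces $\rho_{\lambda,6}^z$ to be regular, and Lemma~\ref{lem:smoothness} together with Proposition~\ref{prop:irred-exist} gives the corollary.

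First, for the cocycle count, I would assemble the cohomology of the summands of $\sln[6]_{\rho_{\lambda,6}^z}$. For $\lambda=e^{i\pi/6}$ one checks $\lambda^{2k}\ne1$ and $\Delta_{3_1}(\lambda^{2k})\ne0$ for $k\in\{2,3,4\}$ (here $\lambda^4=e^{2i\pi/3}$, $\lambda^6=-1$, $\lambda^8=e^{-2i\pi/3}$, and $\Delta_{3_1}(-1)=3$), whereas $\Delta_{3_1}(\lambda^{2})=0=\Delta_{3_1}(\lambda^{10})$ because $\lambda^{10}=e^{-i\pi/3}$ is the second root of $\Delta_{3_1}(t)=t^2-t+1$. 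Hence Lemma~\ref{lem:dim-H1Rn} applies to $R_4,R_6,R_8$ and gives $\dim H^1(\Gamma_{3_1};R_{2k})=\dim H^1(\Gamma_{3_1};R_2)=1$ with $H^0(\Gamma_{3_1};R_{2k})=0$ for $k\in\{2,3,4\}$, while $\dim H^1(\Gamma_{3_1};R_{10})=3$ must be obtained directly from the presentation $\langle S,T\mid STS=TST\rangle$ (the computer-supported computation recorded above), since Lemma~\ref{lem:dim-H1Rn} no longer applies once $\lambda^{10}$ is a root of $\Delta_{3_1}$. One also needs $H^0(\Gamma_{3_1};R_{10})=0$: the $\rho_\lambda^z(S)$-invariant line of the irreducible $\SLn[2]$-module $R_{10}$ is spanned by the weight-zero vector $e_6^{(10)}$ (the only weight $w$ of $R_{10}$ with $\lambda^{w}=1$ being $w=0$), and \eqref{eq:action} shows $e_6^{(10)}$ is not $\rho_\lambda^z(T)$-invariant, so the two invariant lines are distinct. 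Feeding this into the Clebsch--Gordan decomposition $\sln[6]_{\rho_{\lambda,6}^z}\cong\bigoplus_{k=1}^{5}R_{2k}$ of \eqref{eq:Clebsch-Gordan} yields $H^0(\Gamma_{3_1};\sln[6]_{\rho_{\lambda,6}^z})=0$ and $\dim H^1(\Gamma_{3_1};\sln[6]_{\rho_{\lambda,6}^z})=1+1+1+1+3=7$; the exact sequence $0\to H^0(\Gamma_{3_1};\sln[6]_{\rho_{\lambda,6}^z})\to\sln[6]\to B^1(\Gamma_{3_1};\sln[6]_{\rho_{\lambda,6}^z})\to0$ then gives $\dim Z^1(\Gamma_{3_1};\sln[6]_{\rho_{\lambda,6}^z})=7+35=42$, and in particular $\dim_{\rho_{\lambda,6}^z}R_6(\Gamma_{3_1})\le42$.

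For the explicit family, I would make $C$ precise using $\Gamma_{3_1}\cong\langle x,y\mid x^2=y^3\rangle$, so that a representation is a pair $(A,B)\in\SLn[6]^2$ with $A^2=B^3$. Let $\mathcal O_A$, $\mathcal O_B$ denote the conjugacy classes of $\rho_{\lambda,6}^z(x)$, $\rho_{\lambda,6}^z(y)$ in $\SLn[6]$. Since $\rho_{\lambda,6}^z(x)$ is conjugate to $\rep_6(\mathrm{diag}(i,-i))$ it is semisimple with eigenvalues $\pm i$ of multiplicity $3$, whence $A^2=-I_6$ for every $A\in\mathcal O_A$; likewise $\rho_{\lambda,6}^z(y)$ is conjugate to $\rep_6(\mathrm{diag}(\lambda^2,\lambda^{-2}))$, semisimple with eigenvalues $-1,\lambda^2,\lambda^{-2}$ of multiplicity $2$, whence $B^3=-I_6$ for every $B\in\mathcal O_B$. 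As $\mathcal O_A$ and $\mathcal O_B$ are closed (orbits of semisimple elements) and irreducible, $C:=\mathcal O_A\times\mathcal O_B$ is a closed irreducible subvariety of $R_6(\Gamma_{3_1})$ containing $\rho_{\lambda,6}^z$, of dimension $\dim\mathcal O_A+\dim\mathcal O_B=(35-17)+(35-11)=18+24=42$ — exactly the count carried out above via $\mathrm{Gr}(3,6)\times\mathrm{Gr}(3,6)$ and $\mathrm{Gr}(2,6)^3$.

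Finally I would assemble the conclusion. Because $C$ is a $42$-dimensional irreducible subvariety of $R_6(\Gamma_{3_1})$ through $\rho_{\lambda,6}^z$ and $\dim_{\rho_{\lambda,6}^z}R_6(\Gamma_{3_1})\le42$, the local dimension at $\rho_{\lambda,6}^z$ equals $42=\dim Z^1(\Gamma_{3_1};\sln[6]_{\rho_{\lambda,6}^z})$; hence $\rho_{\lambda,6}^z$ is regular, and by Lemma~\ref{lem:smoothness} it is a smooth point lying in a unique component, which must coincide with $C$ (an irreducible closed subvariety of the same dimension containing a neighborhood of the smooth point). That the generic representation of $C$ is irreducible follows from Proposition~\ref{prop:irred-exist}: $\rho_{\lambda,6}^z$ is a limit of irreducible representations of $\Gamma_{3_1}$ in $R_6(\Gamma_{3_1})$, which lie in $C$ because they are close to the smooth point $\rho_{\lambda,6}^z$, so $C$ contains irreducible representations and Proposition~3.8 of \cite{Newstead1978} applies (equivalently, reducibility holds only on the proper Zariski-closed locus where the eigenspaces of $A$ and $B$ share a common invariant subspace). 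I expect the one genuinely delicate point to be the direct evaluation $\dim H^1(\Gamma_{3_1};R_{10})=3$ — unlike for the lower weights, no reduction via Lemma~\ref{lem:dim-H1Rn} is available here since $\lambda^{10}$ is a root of $\Delta_{3_1}(t)$ — the remaining steps being routine bookkeeping with orbit dimensions and the long exact cohomology sequences already at hand.
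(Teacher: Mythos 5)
Your proof is correct and follows essentially the same route as the paper: the cocycle count $\dim Z^1(\Gamma_{3_1};\sln[6]_{\rho_{\lambda,6}^z})=42$ (resting on the computer computation $\dim H^1(\Gamma_{3_1};R_{10})=3$), the explicit $42$-dimensional family through $\rho_{\lambda,6}^z$, and Lemma~\ref{lem:smoothness}. Your description of $C$ as $\mathcal O_A\times\mathcal O_B$ is just a cleaner packaging of the paper's Grassmannian eigenspace count, and your verification of $H^0(\Gamma_{3_1};R_{10})=0$ usefully fills in a detail the paper leaves implicit.
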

 \begin{proof}
 Computer supported calculations give that $\dim Z^1(\Gamma_{3_1}, \sln[6]_{\rho_{\lambda,6}^z}) =42$. Additionally, we  constructed a $42$-dimensional component $C$ containing $\rho_{\lambda,6}^z$. Now, the assertion follows from Lemma~\ref{lem:smoothness}.
\end{proof}

\bibliographystyle{alpha}
\bibliography{heusener-MathSlovaca-revision}

 \end{document}